\theoremstyle{plain}
\newtheorem{theorem}{Theorem}
\newtheorem{lemma}[theorem]{Lemma}
\theoremstyle{remark}
\newcommand{\cmp}{\mathsf{m}_p}
\newcommand{\kamp}{\mathfrak{m}_p}
\newcommand{\ep}{\frac{1}{p}}
\newcommand{\A}{\mathcal A}
\newcommand{\Ap}{\mathcal A^+}
\newcommand{\App}{\mathcal A^{++}}
\newcommand{\B}{\mathcal B}
\newcommand{\Bp}{\mathcal {B}^+}
\newcommand{\Bpp}{\mathcal B^{++}}
\newcommand{\bhp}{B(H)^+}
\newcommand{\bhpp}{B(H)^{++}}
\newcommand\rng{\operatorname{rng}}
\mathchardef\hy="2D
\renewcommand{\l}{\lambda}
\newcommand{\ler}[1]{\left( #1 \right)}
\newcommand{\fel}{\frac{1}{2}}
\newcommand{\mfel}{{-\frac{1}{2}}}
\begin{document}

\title[]{Maps on positive cones in operator algebras preserving power means}

\author{Lajos Moln\'ar}
\address{University of Szeged, Interdisciplinary Excellence Centre, Bolyai Institute,
H-6720 Szeged, Aradi v\'ertan\'uk tere 1.,
Hungary, and
Budapest University of Technology and Economics,  Institute of Mathematics,
H-1521 Budapest, Hungary}
\email{molnarl@math.u-szeged.hu}
\urladdr{http://www.math.u-szeged.hu/\~{}molnarl}

\dedicatory{Dedicated to Professor J\'anos Acz\'el on the occasion of his 95th birthday}

\thanks{
This paper was written while the author was a visiting researcher at the Alfr\'ed R\'enyi Institute of Mathematics (Hungarian Academy of Sciences).
Ministry of Human Capacities, Hungary grant 20391-3/2018/FEKUSTRAT is  acknowledged and the work was also supported by the National Research, Development and Innovation Office of Hungary, NKFIH, Grant No. K115383.}

\begin{abstract}
In this paper we consider power means of positive Hilbert space operators both in the conventional and in the Kubo-Ando senses. We describe the corresponding isomorphisms (bijective transformations respecting those means as binary  operations) on positive definite cones and on positive semidefinite cones in operator algebras. We also investigate the question when those two sorts of power means can be transformed into each other.
\end{abstract}

\subjclass[2010]{Primary: 47B49, 47A64, 46L40. Secondary: 26E60.}
\keywords{Power means; conventional power mean; Kubo-Ando power mean; positive cones in $C^*$-algebras; preservers.}
\maketitle

\section{Introduction and statements of the results} 

Let $p$ be a nonzero real number. The $p$th power mean $M_p(t,s)$ of the positive real numbers $t,s$ is defined by
\begin{equation}\label{E:4}
M_p(t,s)=\ler{\frac{t^p+s^p}{2}}^\ep
\end{equation}
and it is one of the most fundamental kinds of means for numbers. Means of positive (definite or semidefinite) matrices or Hilbert space operators are also very important concepts having wide range of applications.
Formula \eqref{E:4} above can easily be extended to that setting at least in the positive definite case. In fact, using functional calculus, formally the same definition works fine, we just write positive definite matrices or operators, or positive definite elements $A,B$ of a $C^*$-algebra in the place of the numbers $t,s$ in \eqref{E:4}:
\[
\ler{\frac{A^p+B^p}{2}}^\ep.
\]
The so obtained concept that can be called the conventional $p$th power mean is, though very natural, not really satisfactory for the purposes of matrix theory and operator theory. Indeed, the main disadvantage is that it is not monotone in its variables with respect to the usual, L\"owner order, which comes from the concept of positive semidefiniteness. The 'proper' definition of the power mean in the case where $p\in ]-1,1[, p\neq 0$ is the following one:
\[
A^\fel\ler{\frac{I+(A^\mfel B A^\mfel)^p}{2}}^\ep A^\fel.
\]
Let us explain this in a bit more details.

Probably the most important notion of means for positive semidefinite matrices or Hilbert operators is due to Kubo and Ando \cite{KubAnd80}. Very briefly, it can be summarized as follows. Let $H$ be a complex Hilbert space. Denote by $\bhp$ the convex cone of all bounded positive semidefinite linear operators on $H$. We say that a binary operation $\sigma$ on $\bhp$ is a Kubo-Ando mean if  the following requirements are fulfilled (from (a) to (d), all operators are supposed to belong to $\bhp$):

\begin{itemize}
\item[(a)] $I\sigma I=I$;
\item[(b)] if $A\leq C$ and $B\leq D$, then $A\sigma B\leq C\sigma D$;
\item[(c)] $C(A\sigma B)C\leq (CAC)\sigma (CBC)$;
\item[(d)] if $A_n \downarrow A$ and $B_n \downarrow B$ strongly,
then $A_n\sigma B_n \downarrow A\sigma B$ strongly (the sign $\downarrow$ refers to monotone decreasing convergence in the usual (L\"owner) order among self-adjoint operators).
\end{itemize}
Convex combination among Kubo-Ando means is defined in the straightforward, natural way. The celebrated result, Theorem 3.2 in \cite{KubAnd80} says that, for infinite dimensional $H$, there is an affine isomorphism from the class of all Kubo-Ando means $\sigma$ on $\bhp$ onto the class of all operator monotone functions $f:]0,\infty [\to ]0,\infty[$ with the property $f(1)=1$ which is given by the formula $f(t)I=I\sigma tI$, $t>0$. For invertible $A,B\in \bhp$, we have
\begin{equation}\label{E:funct}
A\sigma B=A^{\fel} f(A^\mfel BA^\mfel) A^{\fel}.
\end{equation}
Observe that the theorem above implies that Kubo-Ando means do not depend on the underlying Hilbert spaces, they only depend on certain (very special) real functions.
By property (d) we obtain that formula \eqref{E:funct} extends to any invertible $A\in \bhp$ and arbitrary $B\in \bhp$.
The most distinguished Kubo-Ando means are naturally the arithmetic mean with representing function $t\mapsto (1+t)/2$, the harmonic mean with representing function $t\mapsto (2t)/(1+t)$ and the geometric mean with representing function $t\mapsto \sqrt t$, $t>0$. For invertible $A,B\in \bhp$, those means of $A,B$ are in turn the following operators
\[
\frac{A+B}{2}, \quad 2(A^{-1}+B^{-1})^{-1},  \quad A^{\fel} (A^\mfel BA^\mfel)^{\fel} A^{\fel}.
\]
Below, whenever we write $\sigma, f$, we always mean that $\sigma$ is a Kubo-Ando mean and $f$ is its representing operator monotone function. We know from the deep theory of operator monotone functions that each such $f$ has a holomorphic extension to the complex upper half plane. The transpose $\sigma'$ of the Kubo-Ando mean $\sigma$ is the mean
satisfying $A\sigma' B=B\sigma A$, $A,B\in \bhp$. Its representing function is $t\mapsto tf(1/t)$, $t>0$. The Kubo-Ando mean $\sigma$ is called symmetric if $\sigma'=\sigma$.
The adjoint $\sigma^*$ of $\sigma$ is the Kubo-Ando mean satisfying $A\sigma^* B=(A^{-1}\sigma B^{-1})^{-1}$ for all invertible $A,B\in \bhp$. Its representing function is $t\mapsto 1/f(1/t)$, $t>0$. Clearly, the arithmetic, harmonic and geometric means are symmetric Kubo-Ando means and the former two are the adjoints of each other.

In what follows, we introduce some notation. If $\A$ is a $C^*$-algebra (in this paper all $C^*$-algebras are assumed to be unital), then $\Ap$ stands for the set of all positive semidefinite elements of $\A$, i.e. elements which are self-adjoint and have nonnegative spectrum. It is called the positive semidefinite cone of $\A$. The subset $\App$ of $\Ap$  containing the positive definite elements, i.e. the invertible elements in $\Ap$, is called the positive definite cone of $\A$. 

As already referred to above, for any nonzero $p$, we define the conventional $p$th power mean $\cmp$ on $\App$ by the formula
\begin{equation*}\label{E:CON}
A\cmp B=\ler{\frac{A^p+B^p}{2}}^\ep, \quad A,B\in \App.
\end{equation*}
The Kubo-Ando $p$th power mean $\kamp$ corresponds to the operator monotone function
\[
t\mapsto \ler{\frac{1+t^p}{2}}^\ep, \quad t>0.
\]
In fact, it is known that for the operator monotonicity of this function we need to assume that  $p\in ]-1,1[$, see Theorem 4 in \cite{BesPet}. So, for such $p$, we define
\begin{equation}\label{E:KA}
A\kamp B=A^\fel\ler{\frac{I+(A^\mfel B A^\mfel)^p}{2}}^\ep A^\fel
, \quad A,B\in \App.
\end{equation}
It can be easily seen that for commuting $A,B\in \App$ we have
\begin{equation}\label{E:com}
A\kamp B=A\cmp B.
\end{equation}

In this paper we discuss maps respecting the operation of power means. Similar studies were made for the arithmetic and harmonic means in \cite{ML09f} and for the geometric mean in \cite{ML09c} on the positive semidefinite cone $\bhp$ . Related results concerning the positive definite cones in $C^*$-algebras were presented in \cite{MolJim}. Propositions 1 and 2 in \cite{MolJim} describe all bijective maps between positive definite cones in $C^*$-algebras which preserve either the arithmetic or the harmonic mean, while Theorem 4 in \cite{MolJim} gives the precise structure of continuous bijective maps between the positive definite cones in von Neumann factors which preserve the geometric mean. The results which we present here are similar to those in the sense that also here it turns out that the transformations under considerations are closely related to the so-called    Jordan *-isomorphisms between the underlying algebras.

Real functions respecting means of real numbers were considered in a number of papers under the name 'mean affine functions'. We mention only a few of them: \cite{Berr}, \cite{Mat}, \cite{Nowak}. The concept appears also in the fundamental book \cite{AD} on functional equations, see page 251. The problems we consider here are clearly related to those investigations but our setting here is much more complicated.

In our first three results we deal with power mean respecting maps in the context of $C^*$-algebras. As for the conventional power mean, we have the following description of the corresponding maps.

\begin{theorem}\label{T:2}
Let $p$ be a nonzero real number, $\A,\B$ be $C^*$-algebras and let $\phi:\App \to \Bpp$ be a bijective map. Then $\phi$
satisfies
\begin{equation*}
\phi(A\cmp B)=\phi(A)\cmp \phi(B), \quad A,B\in \App
\end{equation*}
if and only if there are a Jordan *-isomorphism $J:\A\to \B$ and an element $D\in \Bpp$ such that 
\[
\phi(A)=(DJ(A)^pD)^\ep,\quad A\in \App.
\]
\end{theorem}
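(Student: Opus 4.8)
The plan is to dispatch the easy ``if'' direction by direct computation and to reduce the substantive ``only if'' direction to the already known classification of arithmetic-mean preserving bijections.

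For the ``if'' direction, suppose $\phi(A)=\ler{DJ(A)^pD}^\ep$ for a Jordan *-isomorphism $J:\A\to\B$ and some $D\in\Bpp$. I would first record the standard fact that a Jordan *-isomorphism between $C^*$-algebras is a unital, positive, spectrum-preserving real-linear bijection commuting with the continuous functional calculus on positive elements; in particular $J(C^p)=J(C)^p$ for every $C\in\App$ and every nonzero real $p$. Writing $C=A\cmp B$, so that $C^p=(A^p+B^p)/2$, it then follows that $\phi(C)^p=DJ(C)^pD=DJ(C^p)D=D\frac{J(A)^p+J(B)^p}{2}D=\frac{\phi(A)^p+\phi(B)^p}{2}$, whence $\phi(C)=\phi(A)\cmp\phi(B)$. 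Bijectivity of $\phi$ is clear, as it is a composition of the bijections $A\mapsto J(A)$, $X\mapsto X^p$, $X\mapsto DXD$ and $X\mapsto X^\ep$ of the relevant positive definite cones.

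For the ``only if'' direction the key observation is that the map $X\mapsto X^p$ conjugates the conventional power mean into the arithmetic mean. Concretely, let $h:\App\to\App$ and $g:\Bpp\to\Bpp$ be the bijections $h(A)=A^p$, $g(B)=B^p$ (with inverses $X\mapsto X^\ep$), and set $\tilde\phi=g\circ\phi\circ h^{-1}$. Since $\ler{\frac{A^p+B^p}{2}}^\ep=A\cmp B$, a short computation using the hypothesis $\phi(A\cmp B)=\phi(A)\cmp\phi(B)$ gives, for all $X,Y\in\App$ with $A=X^\ep$ and $B=Y^\ep$,
\[
\tilde\phi\ler{\frac{X+Y}{2}}=\phi(A\cmp B)^p=\frac{\phi(A)^p+\phi(B)^p}{2}=\frac{\tilde\phi(X)+\tilde\phi(Y)}{2}.
\]
Thus $\tilde\phi$ is a bijection from $\App$ onto $\Bpp$ preserving the arithmetic mean. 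At this point I would invoke Proposition 1 of \cite{MolJim}, which describes exactly such maps: there exist a Jordan *-isomorphism $J:\A\to\B$ and an element $D\in\Bpp$ with $\tilde\phi(X)=DJ(X)D$ for all $X\in\App$ (any translation term permitted by the Jensen equation must vanish, since $\tilde\phi$ maps the open cone $\App$ bijectively onto the open cone $\Bpp$, whose vertex is $0$). Unwinding the conjugation, for $X=A^p$ one has $\phi(A)^p=\tilde\phi(A^p)=DJ(A^p)D=DJ(A)^pD$, i.e. $\phi(A)=\ler{DJ(A)^pD}^\ep$, as claimed.

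The genuinely hard content --- the structural classification of arithmetic-mean preserving bijections and its link to Jordan *-isomorphisms --- is thereby outsourced to \cite{MolJim}; within the present argument the only points needing care are that $X\mapsto X^p$ is a well-defined bijection of the positive definite cone for every nonzero real $p$ (including $p<0$, via functional calculus) intertwining $\cmp$ with the arithmetic mean, and that $J$ commutes with the $p$-th power. The main obstacle I anticipate is confirming that no nontrivial translation survives when Proposition 1 of \cite{MolJim} is applied, for which surjectivity onto the full cone $\Bpp$ is the decisive ingredient.
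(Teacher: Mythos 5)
Your proof is correct and follows essentially the same route as the paper: the paper also reduces to the arithmetic-mean case via the conjugation $\psi(A)=\phi(A^\ep)^p$ (your $\tilde\phi$), invokes Proposition 1 of \cite{MolJim}, and uses that Jordan *-isomorphisms respect real powers of positive definite elements to unwind the conjugation. Your extra remark about a vanishing translation term is harmless caution, since Proposition 1 of \cite{MolJim} already yields the form $\tilde\phi(X)=DJ(X)D$ directly.
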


The concept of Jordan *-isomorphisms (or, in other words, $C^*$-isomorphisms) that appears here is of fundamental importance in the theory of operator algebras for many reasons. In a certain sense they are the most important sorts of isomorphisms between those structures. The bijective map $J:\A\to \B$ between $*$-algebras $\A$ and $\B$ is called a Jordan *-isomorphism if it is linear, preserves the Jordan product, i.e.,
\[
J(AB+BA)=J(A)J(B)+J(B)J(A), \quad A,B\in \A,
\]
and preserves also the *-operation, i.e.,
\[
J(A^*)=J(A)^*, \quad A\in \A.
\]
The point of the theorem above is that, as it shows, if a bijective map between the positive definite cones of $C^*$-algebras preserves a conventional power mean, then it is necessarily closely related to a bijective linear transformation between the underlying algebras which is a kind of multiplicative (with respect to the Jordan product).

In the next result we obtain a description of the Kubo-Ando power mean preserving bijections of positive definite cones. However, here we need to assume the continuity of the transformations. 

\begin{theorem}\label{T:1}
Let $p\in [-1,1]$ be a nonzero real number, $\A,\B$ be $C^*$-algebras and let $\phi:\App \to \Bpp$ be a continuous bijective map. Then $\phi$
satisfies
\begin{equation*}
\phi(A\kamp B)=\phi(A)\kamp \phi(B), \quad A,B\in \App
\end{equation*}
if and only if there are a Jordan *-isomorphism $J:\A\to \B$ and an element $D\in \Bpp$ such that 
\[
\phi(A)=DJ(A)D,\quad A\in \App.
\]
\end{theorem}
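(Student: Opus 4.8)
The sufficiency direction should reduce to a direct computation. A Jordan *-isomorphism $J:\A\to\B$ is automatically isometric and preserves the continuous functional calculus of self-adjoint elements (it satisfies $J(X^2)=J(X)^2$, hence respects powers, polynomials and, by continuity, all continuous functions, as well as the triple identity $J(XYX)=J(X)J(Y)J(X)$); applying this to the defining formula \eqref{E:KA} gives $J(A\kamp B)=J(A)\kamp J(B)$. Separately, property (c) used for an invertible $C$ and again for $C^{-1}$ forces the congruence \emph{equality} $C(A\kamp B)C=(CAC)\kamp(CBC)$. Combining these with $\phi(A)=DJ(A)D$ yields $\phi(A\kamp B)=DJ(A\kamp B)D=(DJ(A)D)\kamp(DJ(B)D)=\phi(A)\kamp\phi(B)$.

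For necessity I would first normalize. The congruence equality just noted shows that $A\mapsto C\phi(A)C$ still preserves $\kamp$ for every invertible $C\in\Bpp$, so I may pass to $\psi(A)=\phi(I)^\mfel\phi(A)\phi(I)^\mfel$, a continuous $\kamp$-preserving bijection with $\psi(I)=I$; recovering $D=\phi(I)^\fel$ at the end, it suffices to prove that $\psi$ is the restriction of a Jordan *-isomorphism. Writing $f$ for the representing function $t\mapsto((1+t^p)/2)^\ep$, the choice $A=I$ gives $I\kamp A=f(A)$ via functional calculus, whence $\psi(f(A))=\psi(I)\kamp\psi(A)=f(\psi(A))$: thus $\psi$ intertwines the functional-calculus map induced by $f$. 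More generally, letting one variable be a scalar multiple of $I$, one computes $\lambda I\kamp A=((\lambda^p I+A^p)/2)^\ep$, again a functional-calculus expression in $A$, so $\psi$ intertwines a whole one-parameter family of such maps. Relation \eqref{E:com} supplies a further constraint: for commuting $A,B$ one has $\psi(A\cmp B)=\psi(A)\kamp\psi(B)$, tying the conventional power mean on abelian subalgebras to the genuine Kubo-Ando mean on the range side. Since $f(1)=1$ and $f'(1)=\tfrac12$, iterating $f$ contracts the whole cone toward $I$, which yields good control of $\psi$ near the identity.

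The crux is to pass from this mean-theoretic data to the Jordan structure. The endpoints are already available: $\mathfrak m_1$ is the arithmetic mean and $\mathfrak m_{-1}$ the harmonic mean, so Propositions 1 and 2 of \cite{MolJim} settle $p=\pm1$ directly. For $p\in\,]-1,1[\,$ the plan is to reduce to the arithmetic case by exploiting that every normalized symmetric Kubo-Ando mean coincides with the arithmetic mean to first order at $I$: if $A=I+X$ and $B=I+Y$ with $X,Y$ small self-adjoint, then $A\kamp B=I+\tfrac12(X+Y)+O(\|X\|^2+\|Y\|^2)$ precisely because $f'(1)=\tfrac12$. Combining this infinitesimal coincidence with the congruence homogeneity and continuity, I would try to show that $\psi$ in fact preserves the arithmetic mean $\mathfrak m_1$, after which Proposition 1 of \cite{MolJim} identifies $\psi$ with a Jordan *-isomorphism and completes the argument.

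The step I expect to be the genuine obstacle is exactly this last reduction. We are granted only continuity, not differentiability, so a naive Taylor expansion of $\psi$ at $I$ is unavailable, and the second-order term of $\kamp$—which is what actually encodes the Jordan product and distinguishes $\kamp$ from $\mathfrak m_1$—cannot simply be read off. Upgrading the first-order coincidence to a global identity, presumably through a limiting or iteration scheme built on the contraction $f^{\circ n}\to 1$ together with the congruence action, and carrying this out uniformly over an arbitrary (non-factor) $C^*$-algebra rather than over a von Neumann factor as in the geometric-mean result of \cite{MolJim}, is where the main difficulty will lie; a likely preliminary hurdle is to show that $\psi$ sends the scalars $\lambda I$ into the centre so that the scalar model (where the continuous $\cmp$-preserving bijections of $]0,\infty[$ are exactly the maps $t\mapsto ct$) can be invoked.
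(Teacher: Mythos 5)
Your sufficiency argument and the initial normalization $\psi(A)=\phi(I)^\mfel\phi(A)\phi(I)^\mfel$ are fine, and the endpoint cases $p=\pm1$ are indeed immediate from Propositions 1 and 2 of \cite{MolJim}. But for $p\in\,]-1,1[$ the necessity proof stops exactly where the theorem actually lives: you never carry out the step of showing that $\psi$ preserves the arithmetic mean, and you say yourself that this is the genuine obstacle. Moreover, the route you sketch for it cannot work as stated. The first-order coincidence $A\kamp B=I+\tfrac12(X+Y)+O(\|X\|^2+\|Y\|^2)$ is a statement about the \emph{means}, not about $\psi$: the hypothesis $\psi(A\kamp B)=\psi(A)\kamp\psi(B)$ gives no access whatsoever to $\psi\bigl(\frac{A+B}{2}\bigr)$, and since $\psi$ is merely continuous, no Taylor expansion, iteration of $f$, or contraction-toward-$I$ scheme extracts the missing second-order information. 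What you have assembled (the intertwining $\psi(f(A))=f(\psi(A))$, the scalar-variable identities, the constraint from \eqref{E:com}) is preliminary data; the bridge from this data to the Jordan structure is absent, so the proof is incomplete.

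For comparison, the paper closes this gap by an entirely different mechanism that never tries to compare $\kamp$ with $\mathfrak{m}_1$ directly. First, using \eqref{E:com} with the commuting pair $\alpha^\ep A,\ \beta^\ep A$, it proves $\phi(r^\ep A)=r^\ep\phi(A)$ for all positive rationals $r$, hence (by continuity) full positive homogeneity. Second, it characterizes the strict order intrinsically in terms of the operation: $A<B$ if and only if the equation $A\kamp X=B/2^\ep$ is solvable for $X\in\App$, so $\phi$ is a strict order isomorphism. Homogeneity plus the strict-order preservation imply that $\phi$ is a surjective Thompson isometry, and the structure theorem for Thompson isometries (Theorem 9 in \cite{ML14a}) then delivers $\phi(A)=DJ(A)D$ at once, homogeneity ruling out the inverted summand $(I-P)J(A^{-1})$. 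In other words, the decisive external input is the Thompson-isometry classification, not Proposition 1 of \cite{MolJim}; if you want to salvage your outline, replacing your arithmetic-mean reduction by the homogeneity-plus-solvability argument is the way to do it.
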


It is interesting to observe that, for $p\neq \pm 1$, the groups of (continuous) automorphisms of the operations $\cmp$ and $\kamp$ are different, which implies that the operations themselves are also quite different in general.   
But on the positive definite cones in commutative $C^*$-algebras, the conventional and Kubo-Ando power means obviously coincide. We will prove that the converse is also true. In fact, we will show the much stronger result that if one positive definite cone equipped with the conventional power mean is isomorphic (via a continuous bijection) to another positive definite cone equipped with the Kubo-Ando power mean, then the underlying algebras are necessarily commutative. The precise statement reads as follows.

\begin{theorem}\label{T:3}
Let $p\in]-1,1[$ be a nonzero real number and let  $\A,\B$ be $C^*$-algebras. Assume that $\phi:\App \to \Bpp$ is a continuous bijective map such that
\begin{equation*}
\phi(A\cmp B)=\phi(A)\kamp \phi(B), \quad A,B\in \App.
\end{equation*}
Then the algebras $\A,\B$ are necessarily commutative.
\end{theorem}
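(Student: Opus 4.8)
The plan is to isolate a purely universal-algebraic property of $\cmp$ that is automatically preserved by any isomorphism and then show it cannot hold for $\kamp$ on a non-commutative cone. That property is \emph{bisymmetry}: an operation $\star$ is bisymmetric if $(a\star b)\star(c\star d)=(a\star c)\star(b\star d)$ for all $a,b,c,d$. First I note that $\cmp$ is bisymmetric on $\App$, since a direct functional-calculus computation gives $(A\cmp B)\cmp(C\cmp D)=((A^p+B^p+C^p+D^p)/4)^\ep$, which is symmetric in the four entries. Because $\phi$ is a bijection intertwining $\cmp$ with $\kamp$, bisymmetry transports through $\phi$: writing arbitrary elements of $\Bpp$ as $\phi(A),\phi(B),\phi(C),\phi(D)$ and applying $\phi$ to the bisymmetry identity for $\cmp$ shows that $\kamp$ is bisymmetric on $\Bpp$. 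The whole theorem is thereby reduced to the assertion that a bisymmetric Kubo--Ando power mean forces commutativity.

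The main work, and what I expect to be the principal obstacle, is to prove: if $\kamp$ is bisymmetric on $\Bpp$ with $p\in\,]-1,1[$, $p\neq0$, then $\B$ is commutative. I would exploit the congruence covariance of Kubo--Ando means, $T(A\kamp B)T^*=(TAT^*)\kamp(TBT^*)$ for invertible $T$, which passes through nested means; conjugating the full bisymmetry identity by $A^{\mfel}$ normalizes the first slot to $I$, so it suffices to treat $f(B)\kamp(C\kamp D)=f(C)\kamp(B\kamp D)$ for all $B,C,D\in\Bpp$, where $f(X)=((I+X^p)/2)^\ep$ and I used $I\kamp X=f(X)$. This identity says that the left-hand side is invariant under the swap $B\leftrightarrow C$. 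Substituting $B=e^{sX}$, $C=e^{sY}$, $D=e^{sZ}$ with self-adjoint $X,Y,Z$ and expanding to second order in $s$, the part symmetric in the perturbations matches automatically (here the symmetry $A\kamp B=B\kamp A$ of the power mean is used), while the obstruction is the swap-antisymmetric part, a commutator expression carrying a scalar coefficient $c_p$. The conceptual point is that $c_p$ vanishes precisely at $p=\pm1$ --- exactly the values where $\kamp$ degenerates to the arithmetic and harmonic means, which are quasi-arithmetic and hence bisymmetric --- and $c_p$ is a nonzero multiple of $1-p^2$ throughout $]-1,1[\setminus\{0\}$. Forcing the obstruction to vanish for all self-adjoint directions $X,Y,Z$ then yields $[X,Y]=0$ for all self-adjoint $X,Y$, i.e.\ $\B$ is commutative. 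The delicate step is computing $c_p$ and confirming its non-vanishing on the admissible range; as an illustrative sanity check one can exhibit a concrete failure of bisymmetry for $\kamp$ on two non-commuting positive definite matrices in $M_2(\mathbb C)$.

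Once $\B$ is known to be commutative the conclusion for $\A$ is immediate. On a commutative cone the two means agree, so $\kamp=\cmp$ on $\Bpp$ by \eqref{E:com}, and the hypothesis becomes $\phi(A\cmp B)=\phi(A)\cmp\phi(B)$; thus $\phi$ is a $\cmp$-preserving bijection and Theorem~\ref{T:2} furnishes a Jordan $*$-isomorphism $J:\A\to\B$ implementing it. Finally, a Jordan $*$-isomorphism carries the self-adjoint Jordan algebra of $\A$ onto that of $\B$, and commutativity of a $C^*$-algebra is a Jordan invariant: it amounts to associativity of the Jordan product $X\circ Y=\fel(XY+YX)$, because the Jordan associator equals $\tfrac14[Y,[X,Z]]$, and a central commutator of self-adjoint elements is skew-adjoint (hence normal) and quasinilpotent by the Kleinecke--Shirokov theorem, therefore zero. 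Since $\B$ is commutative its Jordan product is associative, hence so is that of $\A$, and therefore $\A$ is commutative as well.
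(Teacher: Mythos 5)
Your outer reductions are sound: $\cmp$ is indeed bisymmetric, bisymmetry transports through the bijection $\phi$ (using only surjectivity), and your final step from ``$\B$ commutative'' to ``$\A$ commutative'' via Theorem \ref{T:2} and Jordan-invariance of commutativity is correct. But the entire weight of the theorem now rests on the unproven lemma that bisymmetry of $\kamp$ forces commutativity, and the computation you propose for it provably cannot work. For \emph{any} symmetric Kubo--Ando mean $\sigma$ with representing function $f$ (so $f(1)=1$, $f'(1)=\fel$), a direct expansion gives the universal second-order formula
\begin{equation*}
(I+a)\,\sigma\,(I+b)=I+\frac{a+b}{2}+\frac{f''(1)}{2}(a-b)^2+O(3),
\end{equation*}
with \emph{no} commutator corrections at this order (the cross terms coming from $U^{\mfel}VU^{\mfel}$ and from the outer congruence by $U^{\fel}$ cancel exactly). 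Feeding this into the normalized bisymmetry identity, with $B=I+b$, $C=I+c$, $D=I+d$, the swap-antisymmetric part of $\bigl(I\sigma B\bigr)\sigma\bigl(C\sigma D\bigr)-\bigl(I\sigma C\bigr)\sigma\bigl(B\sigma D\bigr)$ at second order equals
\begin{equation*}
\frac{f''(1)}{4}\bigl[(bd+db)-(cd+dc)\bigr]+\frac{f''(1)}{8}\bigl[(b-c-d)^2-(c-b-d)^2\bigr]=0,
\end{equation*}
identically, for every $f$ and without any commutativity assumption. So your coefficient $c_p$ is $0$ for all $p$, not a nonzero multiple of $1-p^2$; the obstruction to bisymmetry, if it exists, lives at third order or beyond, and nothing in your sketch establishes it. There is a second, independent gap: even if you exhibited a failure of bisymmetry for non-commuting matrices in $M_2(\mathbb C)$, you would still have to propagate that failure into an \emph{arbitrary} non-commutative $C^*$-algebra $\B$ (say via an irreducible representation of dimension at least $2$ and Kadison transitivity, noting that unital $*$-homomorphisms commute with the functional calculus defining $\kamp$); this transfer step is not addressed at all.

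For contrast, the paper's proof avoids any such obstruction computation. It first shows (using the continuity hypothesis and the scaling identity $\phi(A/2^\ep)=\phi(A)/2^\ep$) that $\phi$ is positive homogeneous; it then characterizes strict order on both sides through solvability of $A\cmp X=B/2^\ep$ (equivalent to $A^p<B^p$) and of $\phi(A)\kamp \phi(X)=\phi(B)/2^\ep$ (equivalent to $\phi(A)<\phi(B)$), deduces that $\psi(A)=\phi(A^\ep)$ satisfies $d_T(\psi(A),\psi(B))=\ep\, d_T(A,B)$, i.e.\ is a non-isometric Thompson dilation, and invokes Theorem 18 of \cite{MLxxx}, which says exactly that such a dilation forces commutativity. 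Note also that your argument never uses continuity of $\phi$, whereas the paper needs it and explicitly lists its removal as an open problem --- a further sign that the lemma you left unproved is where the genuine difficulty of the theorem is concentrated.
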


We now turn to the case of positive semidefinite cones. In the remaining results,  let $H$ be a complex Hilbert space.
Since $\kamp$ is a Kubo-Ando mean, $A\kamp B$ is defined for any pairs of elements of $\bhp$. As for the conventional power mean $\cmp$, it is no problem to define it on $\bhp$ when $p$ is positive. But how to define it for negative $p$? We can handle the case $-1\leq p<0$ as follows. Observe that with $q=-p$, for positive invertible $A,B\in \bhp$ we have
\[
A\cmp B=\ler{\frac{A^{-q}+B^{-q}}{2}}^{-\frac{1}{q}}=(A^q \mathfrak{m}_{-1}  B^q)^{\frac{1}{q}}.
\]
If $A,B \in \bhp$ are arbitrary and $A_n,B_n\in \bhp$ are invertible, they form norm bounded sequences for which $A_n\downarrow A$, $B_n\downarrow B$ in the strong operator topology, then we have the same type of convergence for the $q$th powers. Indeed, this follows from the operator monotonicity of the function $t\mapsto t^q$ and from the fact that any continuous bounded real function is strongly continuous (see 4.3.2. Theorem in \cite{Mur}). By the property (d) of Kubo-Ando means, we deduce that $A_n^q \mathfrak{m}_{-1}  B_n^q \downarrow A^q \mathfrak{m}_{-1}  B^q$ strongly and then that $(A_n^q \mathfrak{m}_{-1}  B_n^q)^{\frac{1}{q}}$ converges to $(A^q \mathfrak{m}_{-1}  B^q)^{\frac{1}{q}}$  strongly (however, monotonicity is no longer guaranteed since the exponent $1/q$ is greater than or equal to 1, and the power function with exponent greater than 1 is well-known to be not operator monotone). Therefore, for $-1\leq p<0$, it is natural to define
\begin{equation}\label{E:8}
A\cmp B=(A^q \mathfrak{m}_{-1}  B^q)^{\frac{1}{q}}, \quad A,B\in \bhp.
\end{equation}

We can now formulate our result concerning maps on the positive semidefinite cone $\bhp$ which preserve the conventional power mean. We highlight the fact that neither in relation with $\cmp$, nor in relation with $\kamp$ do we assume the continuity of the transformations under consideration. This is a serious difference between the cases of the positive definite and semidefinite cones (see the former two results). 

\begin{theorem}\label{T:5}
Let $p\in [-1,1]$ be a nonzero real number, $\phi:\bhp \to \bhp$ be a bijective map such that
\begin{equation}\label{E:2}
\phi(A\cmp B)=\phi(A)\cmp \phi(B), \quad A,B\in \bhp.
\end{equation}
Then there is an invertible bounded either linear or conjugate linear operator $T:H\to H$ such that $\phi$ is of the form
\begin{equation}\label{E:5}
\phi(A)=(TA^{|p|}T^*)^{\frac{1}{|p|}}, \quad A\in \bhp.
\end{equation} 
Conversely, every map $\phi :\bhp \to \bhp$ of the form \eqref{E:5} satisfies \eqref{E:2}.
\end{theorem}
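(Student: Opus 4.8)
The plan is to remove the outer power in $\cmp$ by a power transformation, thereby reducing the problem to the arithmetic or the harmonic mean, whose bijective preservers on $\bhp$ were already determined in \cite{ML09f}. Concretely, I would put $q=|p|$ and consider the bijection $\theta\colon\bhp\to\bhp$ given by $\theta(A)=A^{q}$, with inverse $\theta^{-1}(X)=X^{1/q}$; both are well defined through the continuous functional calculus and carry $\bhp$ onto itself, including its non-invertible boundary. I then pass to the conjugated map $\psi=\theta\circ\phi\circ\theta^{-1}$, that is, $\psi(X)=\phi(X^{1/q})^{q}$ for $X\in\bhp$, which is again a bijection of $\bhp$. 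The purpose of the substitution is to linearise the mean. If $p>0$, then $A\cmp B=((A^{q}+B^{q})/2)^{1/q}$, and inserting $A=X^{1/q}$, $B=Y^{1/q}$ into \eqref{E:2} and raising to the power $q$ gives at once
\[
\psi\left(\frac{X+Y}{2}\right)=\frac{\psi(X)+\psi(Y)}{2},\qquad X,Y\in\bhp,
\]
so $\psi$ preserves the arithmetic mean. If $-1\le p<0$, then by the defining formula \eqref{E:8}, written as $A\cmp B=(A^{q}\,\mathfrak{m}_{-1}\,B^{q})^{1/q}$, the identical manipulation yields
\[
\psi(X\,\mathfrak{m}_{-1}\,Y)=\psi(X)\,\mathfrak{m}_{-1}\,\psi(Y),\qquad X,Y\in\bhp,
\]
so $\psi$ preserves the harmonic mean $\mathfrak{m}_{-1}$.

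At this stage I would invoke the description of arithmetic- and harmonic-mean preserving bijections of $\bhp$ from \cite{ML09f}: in both cases there is a bounded invertible linear or conjugate linear operator $T\colon H\to H$ with $\psi(X)=TXT^{*}$ for every $X\in\bhp$. Undoing the conjugation then produces
\[
\phi(A)=\theta^{-1}(\psi(\theta(A)))=(TA^{q}T^{*})^{1/q}=(TA^{|p|}T^{*})^{1/|p|},
\]
which is precisely \eqref{E:5}. The converse is a routine verification: for $\phi$ of the form \eqref{E:5} the associated map $\psi(X)=TXT^{*}$ respects both means, since $T(\cdot)T^{*}$ is additive and intertwines operator inversion via $X^{-1}\mapsto(T^{*})^{-1}X^{-1}T^{-1}$; reading the two displayed identities backwards then yields \eqref{E:2}.

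The heart of the matter is the structural input from \cite{ML09f} together with the reason it needs no continuity hypothesis. The arithmetic-mean identity is Jensen's functional equation, whence $X\mapsto\psi(X)-\psi(0)$ is additive on the cone and extends to a $\mathbb{Q}$-linear bijection of $\bh_{sa}$ carrying $\bhp$ onto a translate of itself; the L\"owner order promotes this to a bounded, $\R$-linear order automorphism, and such automorphisms of $\bhp$ are exactly the congruences $X\mapsto TXT^{*}$. Surjectivity onto $\bhp$ is essential and is what annihilates any additive constant, because the range of $X\mapsto TXT^{*}+S$ equals $\{\,Z:Z\ge S\,\}$, which coincides with $\bhp$ only when $S=0$. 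The harmonic case is reduced to the arithmetic one by transporting it through operator inversion on $\bhpp$; the delicate point, and the step I expect to demand the most care, is the passage to the non-invertible boundary of $\bhp$, where inversion is unavailable and the extension must instead be controlled by monotone strong approximation exactly as in the justification of \eqref{E:8}.
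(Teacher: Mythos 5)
Your proposal is correct and follows essentially the same route as the paper: conjugating $\phi$ by the power map $A\mapsto A^{|p|}$ to obtain a bijection of $\bhp$ preserving the arithmetic mean (for $p>0$) or the harmonic mean $\mathfrak{m}_{-1}$ (for $p<0$), then invoking the structure theorem of \cite{ML09f} and undoing the conjugation. The only cosmetic difference is in the converse, where the paper appeals directly to the transfer property \eqref{E:transfer} of the Kubo--Ando means $\mathfrak{m}_{1}$ and $\mathfrak{m}_{-1}$ (which settles the non-invertible boundary at once), rather than arguing through additivity and inversion plus approximation as you sketch.
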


The next result concerns the Kubo-Ando power mean.

\begin{theorem}\label{T:4}
Let $p\in [-1,1]$ be a nonzero real number,
$\phi:\bhp \to \bhp$ be a bijective map such that
\begin{equation}\label{E:7}
\phi(A\kamp B)=\phi(A)\kamp \phi(B), \quad A,B\in \bhp.
\end{equation}
Then there is an invertible bounded either linear or conjugate linear operator $T:H\to H$ such that $\phi$ is of the form
\begin{equation}\label{E:6}
\phi(A)=TAT^*, \quad A\in \bhp.
\end{equation} 
Conversely, every map $\phi :\bhp \to \bhp$ of the form \eqref{E:6} satisfies \eqref{E:7}.
\end{theorem}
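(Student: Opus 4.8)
The converse is the routine direction. For an invertible bounded linear or conjugate-linear $T$ the congruence $A\mapsto TAT^*$ is well known to commute with every Kubo-Ando mean, $T(A\sigma B)T^*=(TAT^*)\sigma(TBT^*)$, where in the conjugate-linear case one uses that the representing function is real; specialising to $\sigma=\kamp$ shows every map of the form \eqref{E:6} satisfies \eqref{E:7}. The whole difficulty is the forward direction, and the decisive point is that \eqref{E:7} must be made to yield rigidity \emph{without} any continuity hypothesis. My plan is to extract the L\"owner order from the single binary operation $\kamp$ by a purely algebraic device, deduce that $\phi$ is an order automorphism of $\bhp$, and only then bring in analytic structure.

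To recover the order I would, for invertible $A$, study the reachable set $R_A:=\{A\kamp X: X\in \bhpp\}$. By \eqref{E:KA} this equals $\{A^\fel f(A^\mfel X A^\mfel)A^\fel: X\in\bhpp\}$, and the representing function $f(t)=\ler{(1+t^p)/2}^\ep$ is a strictly increasing bijection of $]0,\infty[$ onto $]2^{-1/p},\infty[$ when $0<p\le 1$ and onto $]0,2^{-1/p}[$ when $-1\le p<0$. Functional calculus then identifies $R_A$ with $\{Y: Y>2^{-1/p}A\}$ in the first case and with $\{Y: 0<Y<2^{-1/p}A\}$ in the second, so that the inclusion order of the sets $R_A$ matches the L\"owner order: $R_A\subseteq R_B$ holds exactly when $B\le A$ (for $p>0$), respectively when $A\le B$ (for $p<0$). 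The same computation shows that the precise shape of $R_A$ also detects invertibility, so $\phi$ will preserve $\bhpp$. Since $\phi$ is a bijection intertwining $\kamp$, it carries each $R_A$ onto $R_{\phi(A)}$, hence preserves these inclusions; therefore $\phi$ is an order automorphism of $\bhp$ carrying $\bhpp$ onto itself.

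Once this is in hand, $\phi|_{\bhpp}$ is a monotone bijection of $\bhpp$, hence norm-continuous there. As the mean of two invertible elements is again invertible, $\phi|_{\bhpp}$ is a continuous $\kamp$-preserving bijection of $\bhpp$, and Theorem \ref{T:1} applies with $\A=\B=\bh$: there are a Jordan *-isomorphism $J$ of $\bh$ and a $D\in\bhpp$ with $\phi(A)=DJ(A)D$ for $A\in\bhpp$. Because every Jordan *-automorphism of $\bh$ is either a *-automorphism $A\mapsto UAU^*$ or a *-antiautomorphism $A\mapsto UA^{t}U^*$, the map $DJ(\cdot)D$ has the form $A\mapsto TAT^*$ with $T$ bounded invertible and linear, respectively conjugate-linear. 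Thus \eqref{E:6} holds on $\bhpp$.

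It remains to pass to the boundary. Replacing $\phi$ by $X\mapsto T^{-1}\phi(X)(T^{-1})^*$ we may assume $\phi=\mathrm{id}$ on $\bhpp$, while $\phi$ is still an order automorphism of $\bhp$ fixing every invertible element. For arbitrary $A\in\bhp$ the invertible operators $A+\frac1n I$ decrease to $A$, and $A$ is their infimum in the L\"owner order; since an order automorphism preserves infima, $\phi(A)=\phi(\inf_n(A+\frac1n I))=\inf_n\phi(A+\frac1n I)=\inf_n(A+\frac1n I)=A$, so $\phi=\mathrm{id}$ on $\bhp$, and undoing the congruence gives \eqref{E:6} everywhere. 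I expect the genuine obstacle to be the order-recovery step, specifically carrying the reachable-set computation through for \emph{non-invertible} operands: here the two signs of $p$ behave differently, since $\kamp$ with a singular operand is itself singular when $p<0$ but is bounded below by a multiple of the other operand when $p>0$, and one must check that the inclusion order of the sets $R_A$ still reproduces the L\"owner order on all of $\bhp$. This is exactly where the missing continuity assumption has to be absorbed; the continuity on the interior, the appeal to Theorem \ref{T:1}, and the infimum argument on the boundary are then comparatively standard.
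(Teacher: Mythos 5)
The converse direction and your computation of the reachable sets $R_A$ for \emph{invertible} $A$ are correct, but the forward direction has a genuine gap, located exactly where you yourself flag it --- and that step is not a technical check to be ``absorbed'' later; it is the actual mathematical content of the theorem. Concretely: (1) Your definition $R_A=\{A\kamp X : X\in \bhpp\}$ is circular, since the identity $\phi(R_A)=R_{\phi(A)}$ requires $\phi(\bhpp)=\bhpp$, which is part of what you are trying to prove; you must let $X$ range over all of $\bhp$, and then the identification of $R_A$ changes. (2) For $p>0$ a single application of the mean cannot detect invertibility in an operation-intrinsic way: since $A\kamp X\geq 0\kamp X=2^{-1/p}X$, and since $A\kamp X\geq 2^{-1/p}A$, one step from $A$ never reaches below $2^{-1/p}A$, and distinguishing ``all members of $R_A$ invertible'' from the alternative presupposes the very notion being characterized. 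This is why the paper's Lemma \ref{L:1}(iii) works with the \emph{iterated} reachable sets $\mathfrak I(A)$ and characterizes invertibility of $B$ by minimality of $\mathfrak I(B)$ under inclusion --- a device your sketch has no analogue of. (3) For $p<0$ the situation is worse: the representing function satisfies $f(0)=0$, so $0\kamp X=0$ and more generally $A\kamp X$ carries range-intersection information (Lemma \ref{L:2}(iv)); the identification of $R_A$ with an order interval $\{Y : Y< 2^{-1/p}A\}$ simply fails for singular $A$, and the inclusion order of the sets $R_A$ no longer reproduces the L\"owner order on $\bhp$. The paper's proof for $p<0$ is forced onto an entirely different track: normalization $\phi(I)=I$, preservation of projections (Lemma \ref{L:2}(iii)), range-inclusion preservation, reduction to rank-one operators, and --- crucially, since no continuity is assumed --- a Jensen-type functional equation \eqref{E:12} to show that the a priori arbitrary bijection $g$ of the half-line governing $\phi(tP)=g(t)\phi(P)$ is the identity, before the strength functional $\lambda(A,P)$ of \cite{BusGud99} yields the order-automorphism property on all of $\bhp$. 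None of this is supplied or replaceable by your reachable-set computation, so the proposal does not constitute a proof for either sign of $p$.

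Two further points, both repairable. First, your claim that an order automorphism of $\bhpp$ is ``hence norm-continuous there'' is unjustified as stated; but the detour through continuity and Theorem \ref{T:1} is unnecessary anyway, because Theorem 1 of \cite{ML11a} describes \emph{all} order automorphisms of $\bhpp$ with no continuity hypothesis whatsoever, giving $\phi(B)=TBT^*$ on $\bhpp$ directly --- this is precisely the route the paper takes, and it is also why the semidefinite-cone theorem, unlike Theorems \ref{T:1} and \ref{T:3}, needs no continuity assumption. Second, your boundary argument via $\phi(A)=\inf_n \phi\ler{A+\tfrac1n I}$ is sound, but it needs the mixed-pair order equivalence ($A\leq B$ iff $\phi(A)\leq\phi(B)$ for $A\in\bhp$, $B\in\bhpp$), which again rests on the solvability characterization of $A\kamp X=B/2^\ep$ with \emph{non-invertible} $A$ --- i.e., on the very lemma-level work identified above; once that is in place, your closing step coincides in substance with the paper's own.
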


Finally, concerning the existence of a map transforming the conventional power mean to the Kubo-Ando power mean, we have the following statement.

\begin{theorem}\label{T:6}
Let $p\in ]-1,1[$ be a nonzero real number and $H$ be a Hilbert space of dimension at least 2. Then
there is no bijective map $\phi:\bhp \to \bhp$ such that
\begin{equation}\label{E:3}
\phi(A\cmp B)=\phi(A)\kamp \phi(B), \quad A,B\in \bhp.
\end{equation}
\end{theorem}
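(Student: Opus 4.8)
The plan is to isolate an algebraic identity in the single binary operation that holds for $\cmp$, fails for $\kamp$, and is automatically preserved by any bijection intertwining the two. The identity I will use is \emph{mediality} (the entropic law): a binary operation $\ast$ is medial if $(A\ast B)\ast(C\ast D)=(A\ast C)\ast(B\ast D)$ for all $A,B,C,D$. The transfer across the hypothetical isomorphism is purely formal: assuming $\phi$ satisfies \eqref{E:3}, take arbitrary $A',B',C',D'\in\bhp$ and, using surjectivity, write them as $\phi(A),\phi(B),\phi(C),\phi(D)$; then \eqref{E:3} turns $(A'\kamp B')\kamp(C'\kamp D')$ into $\phi\big((A\cmp B)\cmp(C\cmp D)\big)$ and $(A'\kamp C')\kamp(B'\kamp D')$ into $\phi\big((A\cmp C)\cmp(B\cmp D)\big)$. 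Hence if $\cmp$ is medial, the two arguments of $\phi$ coincide and $\kamp$ must be medial on all of $\bhp$. The whole proof therefore reduces to two claims: that $\cmp$ is medial, and that $\kamp$ is \emph{not}, the latter being the substantial point.

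First I would verify that $\cmp$ is medial. For $p>0$ the map $\Psi\colon A\mapsto A^p$ is a bijection of $\bhp$ which, by the definition of $\cmp$, conjugates it into the arithmetic mean $(X,Y)\mapsto (X+Y)/2$; the arithmetic mean is visibly medial (both sides reduce to $(A+B+C+D)/4$), so $\cmp$ is medial as well. For $-1\leq p<0$ I would instead use the defining formula \eqref{E:8}, $A\cmp B=(A^q\mathfrak{m}_{-1}B^q)^{1/q}$ with $q=-p$: now $A\mapsto A^q$ conjugates $\cmp$ into the harmonic mean $\mathfrak{m}_{-1}$, which is medial on the positive definite cone (being itself conjugate to the arithmetic mean via $A\mapsto A^{-1}$). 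Mediality of $\mathfrak{m}_{-1}$ extends from invertible to arbitrary elements of $\bhp$ by the joint strong continuity of Kubo-Ando means on bounded monotone sequences, exactly as in the extension argument preceding \eqref{E:8}. Thus $\cmp$ is medial on all of $\bhp$ for every admissible $p$.

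The crux, and the step I expect to be the main obstacle, is to show that $\kamp$ is \emph{not} medial once $\dim H\geq 2$. This is delicate because $\cmp$ and $\kamp$ coincide on commuting elements and in fact agree to second order near the identity: writing $X=I+x$, $Y=I+y$ with $x,y$ self-adjoint and small, a direct expansion from $f(t)=((1+t^p)/2)^\ep$ gives $X\kamp Y=I+\tfrac{x+y}{2}+\tfrac{p-1}{8}(x-y)^2+O(3)$, the very same quadratic approximation as for $\cmp$, and one checks that \emph{any} operation of this form is medial through second order. The obstruction therefore lives at third order, where the genuinely noncommutative terms (iterated commutators such as $[a,[b,c]]$) first appear. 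I would compute the third-order term of $\kamp$ explicitly and substitute it into the mediality difference $(A\kamp B)\kamp(C\kamp D)-(A\kamp C)\kamp(B\kamp D)$ with $A=I+a,\dots,D=I+d$; for $p\neq\pm 1$ this difference carries a nonzero commutator component, so choosing self-adjoint $a,b,c,d$ supported on a two-dimensional subspace and not all mutually commuting makes mediality fail. (Equivalently, one may exhibit the failure by an exact computation on suitable $2\times 2$ positive definite matrices.) This contradicts the conclusion of the first paragraph, so no $\phi$ as in \eqref{E:3} can exist.

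Finally, it is worth recording that this mechanism explains the hypothesis $p\in\,]-1,1[$ precisely. At the endpoints the mean $\kamp$ degenerates to the arithmetic mean ($p=1$) or the harmonic mean ($p=-1$), both of which \emph{are} medial (the quadratic coefficient $\tfrac{p-1}{8}$ being the only obstruction at second order, and the harmonic mean being conjugate to the arithmetic one). Thus the distinguishing invariant vanishes exactly where the statement does, which is a useful consistency check on the approach.
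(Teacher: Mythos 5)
Your route is genuinely different from the paper's, and it is viable. The paper argues structurally: it shows $\phi(0)=0$, that $\phi$ preserves invertibility, and that (after the substitution $A\mapsto A^{1/|p|}$) $\phi$ is an order automorphism; it then invokes the classification of order automorphisms of the positive (semi)definite cone to force $\phi(A)=TA^{|p|}T^*$, and finally derives from \eqref{E:3} that $\kamp$ would have to coincide with the arithmetic (resp.\ harmonic) mean, whence $p=\pm 1$, a contradiction. You instead transfer one equational invariant, mediality, through $\phi$ --- a transfer that formally uses only surjectivity, so your argument even rules out surjective non-injective maps --- and reduce the theorem to two facts: $\cmp$ is medial and $\kamp$ is not. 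Your proof that $\cmp$ is medial is correct for both signs of $p$ (conjugation by $A\mapsto A^{|p|}$ onto the arithmetic, resp.\ harmonic mean, plus the monotone-limit extension to the boundary for negative $p$), and the formal transfer argument is sound.

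The one place where your write-up asserts rather than proves is exactly the crux: the non-mediality of $\kamp$ for $p\neq\pm 1$, $\dim H\geq 2$. That claim is true, and your proposed method does deliver it. Carrying out the third-order expansion (legitimate, since both means are analytic near $I$): with $A=I+a$, $B=I+b$ the cubic terms of $A\kamp B$ and $A\cmp B$ differ by
\[
D(a,b)=\frac{p^2-1}{48}\,\bigl([a,[a,b]]+[b,[b,a]]\bigr),
\]
while the first- and second-order terms agree, as you noted. Since $\cmp$ is exactly medial, the mediality defect of $\kamp$ at $A=I+ta$, $B=I+tb$, $C=D=I$ equals $\tfrac{3}{8}t^3D(a,b)+O(t^4)$; choosing $2\times 2$ Pauli-type matrices $a=\mathrm{diag}(1,-1)$ and $b$ the symmetry exchanging the basis vectors gives $[a,[a,b]]+[b,[b,a]]=4(a+b)\neq 0$, so mediality fails for every nonzero $p\in\,]-1,1[$ once $\dim H\geq 2$. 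So your proof is complete only after this computation is actually performed; as submitted, the decisive step is a prediction, albeit a correct one. Comparing what the two approaches buy: the paper's proof leans on heavy classification theorems but extracts structural information about any hypothetical $\phi$ along the way; yours avoids all structure theory, reduces everything to a $2\times 2$ matrix computation, makes transparent exactly where the hypotheses $p\neq \pm 1$ and $\dim H\geq 2$ enter, and applies to a slightly wider class of maps.
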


\section{Proofs}

In this section we present the proofs of our results. We begin with verifying the statements concerning power mean preservers on the positive definite cones of $C^*$-algebras. But first let us summarize some of the basic properties of Jordan *-isomorphisms between $C^*$-algebras that we will need in what follows.

Let $\mathcal A, \mathcal B$ be $C^*$-algebras and $J:\mathcal A\to \mathcal B$ be a Jordan *-isomorphism. It is apparent that $J$ is a linear order isomorphism between the self-adjoint parts of $\mathcal A$ and $\mathcal B$. Moreover, $J$ is an isometry
\begin{equation*}\label{E:J0}
\|J(X)\|=\|X\|, \quad X\in \mathcal A,
\end{equation*}
see, e.g., \cite{Mat05}. Next, $J$ satisfies
\begin{equation*}\label{E:J1}
J(XYX)=J(X)J(Y)J(X), \quad X,Y\in \mathcal A,
\end{equation*}
and hence
\begin{equation}\label{E:J2}
J(X^n)=J(X)^n, \quad X\in \mathcal A
\end{equation}
holds for every nonnegative integer $n$, see 6.3.2 Lemma in \cite{Pal}. In particular, $J$ is unital meaning that $J$ sends the identity to the identity.  
By Proposition 1.3 in \cite{Sou}, $J$ preserves invertibility,  namely we have
\begin{equation*}\label{E:J3}
J(X^{-1})=J(X)^{-1}
\end{equation*}
for every invertible element $X\in \mathcal A$.
It follows that $J$ preserves the spectrum and, using continuous function calculus, from \eqref{E:J2} we deduce that 
\begin{equation*}\label{E:J4}
J(f(X))=f(J(X))
\end{equation*}
holds for any self-adjoint element $X\in \mathcal A$ and continuous real function $f$ on the spectrum of $X$.  
It then follows that $J$ maps $\mathcal A^{++}$ onto $\mathcal B^{++}$ and
$J$ is an isomorphism between $\App$ and $\Bpp$ with respect to any Kubo-Ando means.

After this, we start with the easy proof of our first result.

\begin{proof}[Proof of Theorem \ref{T:2}]
Assume first that $p=1$. In that case the result is exactly Proposition 1 in \cite{MolJim}. If $p\neq 1$, then consider the map $\psi:\App\to \Bpp$ defined by $\psi(A)=\phi(A^\ep)^p$, $A\in \App$. It is easy to see that $\psi$ is a bijective map between the positive definite cones $\App$ and $\Bpp$ which preserves the arithmetic mean. By the first part of the proof we obtain the desired conclusion  (in fact, we also need to use the fact that any Jordan *-isomorphism respects any real powers of positive definite elements, see the last one among the above listed properties of Jordan *-isomorphisms).
\end{proof}

The proof of the analogous result concerning the Kubo-Ando power mean is much more complicated. In fact, before presenting it we need some more preparations. Firstly, although we have already used this concept implicitly above, recall the definition of the usual order among self-adjoint elements $A,B$ in a $C^*$-algebra. We write $A\leq B$ if $B-A$ is positive semidefinite. The strict order on the positive definite cone $\App$ of the $C^*$-algebra $\A$ is defined as follows: for $A,B\in \App$ we write $A<B$ if $B^\mfel AB^\mfel$ has spectrum contained in the open interval $]0,1[$. 

Secondly, we need to recall the notion of the Thompson metric $d_T$ which is defined on the positive definite cone $\App$ of the $C^*$-algebra $\A$ as follows:
\begin{equation}\label{E:Tho}
d_T (A, B) = \log \max \{M (A/B),M (B/A)\}, \quad A,B\in \mathcal A^{++},
\end{equation}
where $M(X/Y)= \inf\{ t>0 \, : \, X\leq t Y\}$ for any $X,Y\in \mathcal A^{++}$. It is easy to see (cf. \cite{ML09g}) that $d_T$ can also be rewritten as
\[
d_T (A,B)=\left\| \log \left(A^{-\frac{1}{2}} B A^{-\frac{1}{2}}\right) \right\|, \quad A,B\in \mathcal A^{++}.
\]
The structure of surjective Thompson isometries between positive definite cones of $C^*$-algebras is known, it was described in our paper \cite{ML14a}. Theorem 9 in \cite{ML14a} says that for given $C^*$-algebras $\mathcal A,\mathcal B$ and surjective map $\phi:\mathcal A^{++}\to \mathcal B^{++}$ we have that $\phi$ is a surjective Thompson isometry (i.e., a surjective isometry with respect to the metric $d_T$) if and only if
there are a central projection $P$ in $\mathcal B$ and a Jordan *-isomorphism $J:\mathcal A\to \mathcal B$ such that $\phi$ is of the form
\begin{equation}\label{E:36}
\phi(A) = \phi(I)^{1/2}\left(PJ(A)+(I-P)J(A^{-1})\right)\phi(I)^{1/2}
, \quad A\in \mathcal A^{++}.
\end{equation}
Here and in what follows, by a projection we mean a self-adjoint idempotent element which is called central if it commutes with any other element of the algebra.

Finally, we need a property called transfer property of Kubo-Ando means, which is related to the inequality (c) in the introduction. Namely, it is known that for an arbitrary Hilbert space  $H$ and for any Kubo-Ando mean $\sigma$, we have
\begin{equation}\label{E:transfer}
T(A\sigma B)T^*=(TAT^*) \sigma (TBT^*), \quad A,B\in \bhp
\end{equation}
for every invertible bounded either linear or conjugate linear operator $T$ on $H$. 

After this preparation, we can present the proof of our second theorem.

\begin{proof}[Proof of Theorem \ref{T:1}]
Assume that $\phi:\App \to \Bpp$ is a continuous bijective map
which satisfies
\begin{equation*}\label{E:1}
\phi(A\kamp B)=\phi(A)\kamp \phi(B), \quad A,B\in \App.
\end{equation*}
We can and do assume that $p$ is positive. Indeed, it is easy to see that
$A \mathfrak{m}_{-p} B= (A^{-1} \kamp B^{-1})^{-1}$ holds for any $A,B\in \App$ and then one can verify that the continuous bijective map $\psi:\App \to \Bpp$ defined by $\psi(A)=\phi(A^{-1})^{-1}$, $A\in \App$ satisfies 
\begin{equation*}\label{E:1}
\psi(A \mathfrak{m}_{-p} B)=\psi(A)\mathfrak{m}_{-p} \psi(B), \quad A,B\in \App.
\end{equation*}
We show that for any sequence $(B_n)$ in $\App$ such that $B_n\to 0$ in norm, we have that $\phi(B_n)\to 0$ in norm.
Assume that $B_n\to 0$ in norm. Then, by the formula \eqref{E:KA}, for any $A\in \App$ we have $A\kamp B_n\to A/2^\ep$, and hence we obtain that 
\[
\phi(A)\kamp \phi(B_n)=\phi(A\kamp B_n)\to \phi(A/2^\ep).
\]
It follows that $(\phi(B_n))$ is a sequence in $\Bpp$ with the property that for every $X\in \Bpp$, the sequence $(X\kamp \phi(B_n))$ is norm convergent in  $\B$. Choosing $X=I$, it follows again from \eqref{E:KA} that $(\phi(B_n))$ is norm convergent, let its limit be $C\in \Bp$. We have that
\[
\phi(A)\kamp C=\phi(A/2^\ep).
\]
Since, by the monotonicity of Kubo-Ando means (see (b) in the introduction), $X\kamp C\geq 0\kamp C= C/2^\ep$ for every $X\in \Bpp$, we deduce that 
\[
\phi(A/2^\ep)=\phi(A)\kamp C\geq C/2^\ep, \quad A\in \App
\]
meaning that $C$ is majorized by all elements of $\Bpp$. This implies that $C=0$, hence we infer that $\phi(B_n)\to 0$ in norm. From 
\[
\phi(A/2^\ep)=\phi(A)\kamp C=\phi(A)\kamp 0= \phi(A)/2^\ep, \quad A\in \App
\]
we obtain that $\phi$ respects the multiplication of elements by the constant $(1/2)^\ep$.

Assume that, for some given positive numbers $\alpha,\beta$, the equalities $\phi(\alpha^\ep A)=\alpha^\ep \phi(A)$ and $\phi(\beta^\ep A)=\beta^\ep \phi(A)$ hold for all $A\in \App$. Then, since for commuting positive definite elements the conventional and the Kubo-Ando power means coincide (see \eqref{E:com}), we easily compute
\[
\begin{gathered}
\phi((\alpha+\beta)^\ep A)/2^\ep=\phi(((\alpha+\beta)^\ep A)/2^\ep)=\phi((\alpha^\ep A)\cmp (\beta^\ep A))=
\phi((\alpha^\ep A)\kamp (\beta^\ep A))\\=\phi(\alpha^\ep A)\kamp \phi(\beta^\ep A)=(\alpha^\ep \phi(A))\kamp (\beta^\ep \phi(A))=(\alpha^\ep \phi(A))\cmp (\beta^\ep \phi(A))=
(\alpha+\beta)^\ep \phi(A)/2^\ep.
\end{gathered}
\]
This gives us that
\[
\phi((\alpha+ \beta)^\ep A)=(\alpha+\beta)^\ep \phi(A)
\]
holds for all $A\in \App$. From this observation we can easily 
deduce that $\phi(r^\ep A)=r^\ep \phi(A)$ holds for any positive rational number $r$ and element $A\in \App$. By the continuity of $\phi$, this implies that $\phi$ is positive homogeneous.

We next prove that $\phi:\App \to \Bpp$ is a strict order isomorphism meaning that for any $A,B\in \App$ we have
\[
A<B\Longleftrightarrow \phi(A)<\phi(B).
\]
In order to show this, observe that for any given $A,B\in \App$ there exists an $X\in \App$ such that $A\kamp X=B/2^\ep$ if and only if $A<B$. Indeed, using the transfer property and aplying trivial algebraic manipulations, $A\kamp X=B/2^\ep$ is equivalent to $I+(A^\mfel XA^\mfel)^p=(A^\mfel BA^\mfel)^p$ which has a solution $X\in  \App$ if and only if $I<(A^\mfel BA^\mfel)^p$. This latter inequality is equivalent to $A<B$. To see this, one may need to refer to the well-known fact that the spectrum of $A^\fel B^{-1} A^\fel$ equals that of $B^\mfel A B^\mfel$ (in fact, those two elements are unitarily equivalent as one can easily show by using the polar decomposition of $B^{-\fel} A^\fel$). 
Using the above characterization of strict order, it follows that $\phi$ is a strict order isomorphism.

Next, it is clear that 
\[
\inf\{ t>0\, :\, A\leq tB\}= \inf\{ t>0\, :\, A< tB\}
\]
holds for any $A,B\in \App$. It then follows from the positive homogeneity of $\phi$ and from the property that $\phi$ is a strict order isomorphism that $\phi$ is a positive homogeneous Thompson isometry from $\App$ onto $\Bpp$. By the structure \eqref{E:36} of surjective Thompson isometries, we obtain that $\phi$ is of the form $\phi(A)=DJ(A)D$, $A\in \App$ with some Jordan *-isomorphism $J:\A \to \B$ and element $D\in \Bpp$. (Indeed, the part $(I-P)J(A^{-1})$ in \eqref{E:36} must be missing due to the fact that the inverse operation is not homogeneous.) This gives us the sufficiency part of the theorem.

As for the converse statement, we argue as follows. By the above listed properties of Jordan *-isomorphisms, every such transformation is a continuous bijective map between positive definite cones which preserves all Kubo-Ando means and, by the transfer property, the same is true for any map $B\mapsto TBT^*$, $B\in \Bpp$ with $T\in \B$ invertible. The composition of two mean preserving maps is again mean preserving and hence we obtain the desired converse statement.
\end{proof}

For the proof of Theorem \ref{T:3},
recall that for any $A,B\in \App$ we have $A<B$ if and only if the spectrum of $B^\mfel AB^\mfel$ is contained in the open unit interval $]0,1[$. One can see that $A<B$ implies that $TAT^*<TBT^*$ for any invertible $T\in \A$. Indeed, this follows from the fact that $(TBT^*)^\mfel (TAT^*) (TBT^*)^\mfel$ is unitarily similar to $B^\mfel AB^\mfel$ (see the argument given in the first half of page 323 in \cite{MolGMU}).

\begin{proof}[Proof of Theorem \ref{T:3}]
Let $p\in]-1,1[$ and assume that the continuous bijective map $\phi:\App \to \Bpp$ satisfies
\begin{equation*}
\phi(A\cmp B)=\phi(A)\kamp \phi(B), \quad A,B\in \App.
\end{equation*}
In the first few steps of the proof we can closely follow the argument given in the proof of Theorem \ref{T:1}. Indeed, we may assume just like there that $p$ is positive. Next we can show in a very similar way that for any sequence $(B_n)$ in $\App$ which converges to 0 in norm, we have $\phi(B_n)\to 0$ in norm and also that $\phi(A/2^\ep)=\phi(A)/2^\ep$ holds for all $A\in \App$. After this, just as in the mentioned proof, we can verify that $\phi$ is positive homogeneous.

In the last part we observe that the equality $A\cmp X=B/2^\ep$ has a solution $X$ in $\App$ if and only if $A^p<B^p$ while, as we have seen in the proof of Theorem \ref{T:1}, $\phi(A)\kamp \phi(X)=\phi(B)/2^\ep$ has a solution if and only if $\phi(A)<\phi(B)$.  For the bijective transformation $\psi:\App \to \Bpp$ defined by $\psi(A)=\phi(A^\ep)$, $A\in \App$ we have
\[
A<tB \Longleftrightarrow \psi(A) <t^\ep\psi(B)
\]
for any $A,B\in \App$ and positive real number $t$. From this we obtain for the Thompson distances that 
\[
d_T(\psi(A),\psi(B))=\ep d_T(A,B), \quad A,B\in \App,
\]
see the definition of Thompson metric given in \eqref{E:Tho}.
This means that $\psi$ is a dilation (or, in other words, homothety) between the positive definite cones $\App$ and $\Bpp$.
We proved in Theorem 18 in \cite{MLxxx} that the existence of a non-isometric dilation between the positive definite cones of $C^*$-algebras implies that the underlying algebras are  necessarily commutative. This completes the proof of the statement.
\end{proof}

We now turn to the proofs of our results concerning maps on the positive semidefinite cone of a full operator algebra over a Hilbert space.

\begin{proof}[Proof of Theorem \ref{T:5}]
Let $\phi$ be as in the statement of the theorem.
For positive $p$, we clearly have
\[
A\cmp B=(A^p \mathfrak{m}_{1}  B^p)^{\frac{1}{p}}, \quad A,B\in \bhp.
\]
It follows that the bijective map $\psi:\bhp \to \bhp$ defined by $\psi(A)=\phi(A^\ep)^p$, $A\in \bhp$ satisfies 
\[
\psi(A \mathfrak{m}_{1}  B)=\psi(A) \mathfrak{m}_{1}  \psi(B), \quad A,B\in \bhp.
\]
Similarly, for negative $p$, by \eqref{E:8} it follows that the bijective map $\psi:\bhp \to \bhp$ defined by $\psi(A)=\phi(A^{\frac{1}{|p|}})^{|p|}$, $A\in \bhp$ satisfies
\[
\psi(A \mathfrak{m}_{-1}  B)=\psi(A) \mathfrak{m}_{-1}  \psi(B), \quad A,B\in \bhp.
\]
The structures of those maps are known. By the Theorem and Proposition in \cite{ML09f}, we have in both cases that there is an invertible bounded either linear or conjugate linear operator $T:H\to H$ such that $\psi(A)=TAT^*$, $A\in \bhp$. This completes the proof of the necessity part of the statement. The sufficiency follows immediately from the transfer property of the Kubo-Ando means $\mathfrak{m}_{1}$ and $\mathfrak{m}_{-1}$. 
\end{proof}

To prove Theorem \ref{T:4}, we need some auxiliary results that we present below.

\begin{lemma}\label{L:1}
Assume that $0<p\leq 1$. 
\begin{itemize}
\item[(i)]
If $A,B\in \bhp$ are commuting, then 
\[
A\kamp B=\ler{\frac{A^p+B^p}{2}}^\ep.
\]
\item[(ii)] 
For an arbitrary $A\in \bhp$, we have that $A=0$ if and only if for any $X,Y\in \bhp$, the equality $A=X\kamp Y$ implies $X=Y=A$.
\item[(iii)] For any $A\in \bhp$, denote 
\begin{equation*}
\mathfrak I(A)=
\{
(\ldots((A\kamp X_1)\kamp X_2)\ldots )\kamp X_n\, :\, n\in \mathbb N, X_1,\ldots , X_n\in \bhp\}.
\end{equation*}
The operator $B\in \bhp$ is invertible if and only if $\mathfrak I(B)$ is the minimum of the set $\{ \mathfrak{I}(A)\, :\, A\in \bhp\}$ partially ordered by the relation of inclusion.
\end{itemize}
\end{lemma}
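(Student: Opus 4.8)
The plan is to establish the three parts in order, using (i) as the basic computational tool for (ii) and (iii). For (i) I would pass to the positive definite case by approximation. Given commuting $A,B\in\bhp$, put $A_n=A+\frac1n I$ and $B_n=B+\frac1n I$; these are commuting positive definite elements of the $C^*$-algebra $\bh$ with $A_n\downarrow A$ and $B_n\downarrow B$ (in norm, hence strongly). Since $A_n,B_n$ commute and are invertible, \eqref{E:com} gives $A_n\kamp B_n=\ler{\frac{A_n^p+B_n^p}{2}}^\ep$. Letting $n\to\infty$, property (d) forces the left-hand side to converge strongly to $A\kamp B$, while the norm continuity of the functional calculus for $t\mapsto t^p$ and $t\mapsto t^\ep$ makes the right-hand side converge in norm, hence strongly, to $\ler{\frac{A^p+B^p}{2}}^\ep$; uniqueness of strong limits yields the identity.

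Part (i) immediately yields the special value $A\kamp 0=0\kamp A=2^{-\ep}A$ for every $A\in\bhp$, which drives (ii). For the forward implication, if $A=0=X\kamp Y$ then monotonicity (property (b)) gives $0=X\kamp Y\ge X\kamp 0=2^{-\ep}X$, whence $X=0$, and symmetrically $0\ge 0\kamp Y=2^{-\ep}Y$ gives $Y=0$. For the converse I would argue contrapositively: if $A\ne0$ then $A=(2^{\ep}A)\kamp 0$ is a decomposition with $2^{\ep}A\ne A$, so the stated uniqueness property can hold only when $A=0$.

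For (iii) the key is to locate $\mathfrak I(A)$ well enough to compare the sets by inclusion. The one-step lower bound $C\kamp X\ge C\kamp 0=2^{-\ep}C$ shows, by induction, that if $A$ is invertible then every element of $\mathfrak I(A)$ is invertible; in particular $\mathfrak I(I)\subseteq\bhpp$. The substantive claim is the reverse reachability: from an arbitrary $A\in\bhp$ one can reach every invertible $C$, so that $\bhpp\subseteq\mathfrak I(A)$. I would do this in three moves: the step $A\mapsto A\kamp I\ge 2^{-\ep}I$ lands on an invertible element $A_1$; repeatedly forming $\kamp 0$ scales it down to $D=2^{-n\ep}A_1\le 2^{\ep}C$ for large $n$; and finally $D\kamp X=C$ is solvable in $\bhp$, because $C\ge 2^{-\ep}D$ puts the spectrum of $D^\mfel C D^\mfel$ into the range $[2^{-\ep},\infty)$ of the representing function $f(t)=\ler{\frac{1+t^p}{2}}^\ep$, so that one may take $X=D^\fel f^{-1}(D^\mfel C D^\mfel)D^\fel$.

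Combining these, for invertible $B$ we obtain $\mathfrak I(B)\subseteq\bhpp\subseteq\mathfrak I(A)$ for every $A$, so $\mathfrak I(B)$ is the minimum. If $B$ is not invertible, then $B\in\mathfrak I(B)$ (as $B\kamp B=B$) but $B\notin\mathfrak I(I)\subseteq\bhpp$, so $\mathfrak I(B)\not\subseteq\mathfrak I(I)$ and $\mathfrak I(B)$ cannot be the minimum. I expect the reachability step of (iii), and in particular the exact solvability of $D\kamp X=C$ via the range of $f$ and the associated functional calculus, to be the main obstacle; the approximation in (i) and the monotonicity estimates behind (ii) are comparatively routine.
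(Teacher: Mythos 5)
Your proof is correct, and its overall architecture matches the paper's: part (i) is the same approximation argument (perturb by $\epsilon I$, use property (d) on one side and norm continuity of the functional calculus on the other), and part (iii) has the same skeleton (invertibility propagates through $\mathfrak I$ because $C\kamp X\ge C\kamp 0=2^{-\ep}C$, and every invertible element is reachable by scaling down with repeated $\kamp\,0$ and then solving one equation). The genuine differences are in the constructive steps. For the converse of (ii), the paper builds a nontrivial decomposition of a nonzero $A$ from its spectral resolution: it picks a nonzero spectral projection $P$ with $sP\le A$, sets $X=tP$ for some $0<t<s$ and $Y=(2A^p-X^p)^\ep$, and checks $A=X\kamp Y$ with $X\neq A$; your one-line decomposition $A=(2^\ep A)\kamp 0$, with $2^\ep A\neq A$ whenever $A\neq 0$, achieves the same and is fully justified by (i) — a real simplification. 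In (iii), the paper first proves $\mathfrak I(B)=\bhpp$ for invertible $B$ by congruence-normalizing the equation to $B''\kamp Y=I/2^\ep$ with $B''\le I$ and exhibiting the commuting solution $Y=(I-B''^p)^\ep$, and then handles noninvertible $B$ by the separate trick that $B\kamp(tI-B^p)^\ep=(t/2)^\ep I$ for large $t$; you instead reach an invertible element from an arbitrary $A$ uniformly via $A\kamp I\ge 0\kamp I=2^{-\ep}I$, and solve $D\kamp X=C$ directly by $X=D^\fel f^{-1}\ler{D^\mfel CD^\mfel}D^\fel$, using that $f$ maps $[0,\infty)$ homeomorphically onto $[2^{-\ep},\infty)$, so that $C\ge 2^{-\ep}D$ is precisely the solvability condition. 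Both routes are sound: yours avoids the invertible/noninvertible case split and isolates the clean statement $\bhpp\subseteq\mathfrak I(A)$ for every $A$, while the paper's explicit commuting solution avoids any appeal to the inverse function $f^{-1}$ in the functional calculus.
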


\begin{proof}
The statement (i) is obvious for invertible $A,B\in \bhp$. For general $A,B\in \bhp$, consider $A+\epsilon I, B+\epsilon I$ for $\epsilon>0$, let $\epsilon$ tend to 0 monotone decreasingly, and use the continuity property of Kubo-Ando means (see (d) in the introduction).

As for (ii), assume first that $A=0$ and $A=X\kamp Y$ holds for $X,Y\in \bhp$. Then from $$X/2^\ep= X\kamp 0\leq X\kamp Y=0$$ we obtain that $X=0$, and in the same way we deduce that $Y=0$ also holds. 
Assume now that $A\neq 0$. Considering the spectral measure corresponding to $A$, there is a positive real number $s$ such that with the spectral measure $P$ of the set $[s,\infty[$ we have 
$P\neq 0$ and $sP\leq A$. Choosing any positive number $t<s$, we have that $tP\leq A$, $tP\neq A$. Let $X=tP$ and $Y=(2 A^p-X^p)^\ep$ which is a positive operator that commutes with $X$. It follows that $A=X\kamp Y$, $X\neq A$. This proves (ii).

To verify (iii), first assume that $B\in \bhp$ is invertible. Since $B\kamp X\geq B\kamp 0=B/2^\ep$, it follows that the elements of $\mathfrak I(B)$ are all invertible. On the other hand, let $C\in \bhp$ be invertible. Then taking the $\kamp$ mean of $B$ with 0-s sufficiently many times, we obtain an element $B'$ of $\mathfrak I(B)$ for which $B'\leq C$. We assert that then there exists $X\in \bhp$ such that $B'\kamp X=C/2^\ep$.
In fact, by the transfer property, the solvability of this latter equation is equivalent to that of
\[
(C^{\mfel}B'C^{\mfel})\kamp (C^{\mfel}XC^{\mfel})=I/2^\ep.
\]
With $B''=C^{\mfel}B'C^{\mfel}$, this is further equivalent to 
\[
B''\kamp Y=I/2^\ep
\]
for some $Y\in \bhp$. Since $B''\leq I$, choosing $Y=(I-B''^p)^\ep$, we have a solution $Y$ of the equality $B''\kamp Y=I/2^\ep$. Therefore, we obtain that there does exist $X\in \bhp$ such that $B'\kamp X=C/2^\ep$. This shows that $\mathfrak I(B)=\bhpp$ for any invertible $B\in \bhp$.

Let now $B\in \bhp$ be noninvertible. Then for large enough $t>0$, the element $B\kamp (t I-B^p)^\ep$ is a positive scalar multiple of the identity meaning that $\mathfrak I(B)$ contains one and then all invertible elements of $\bhp$ (see the argument in the previous paragraph). Since $B$ is noninvertible, we obtain that $\mathfrak I(B)$ contains $\bhpp$ as a proper subset. Therefore, $\mathfrak I(B)$ is not minimum, a contradiction.
The statement in (iii) now follows. 
\end{proof}

Assume next that $-1\leq p<0$. Then, denoting $q=-p$, the generating function of the symmetric Kubo-Ando mean $\kamp$ is the function
\[
f(t)=\ler{\frac{1+t^p}{2}}^\ep=
\ler{\frac{2t^q}{1+t^q}}^{\frac{1}{q}}, \quad t>0.
\]
Since this $f$ has limit 0 at 0, several important observations from the paper \cite{ML11g} can be applied. 
For the next lemma whose statements follow from those observation in \cite{ML11g}, we need to recall the following quantity which was originally introduced in 
\cite{BusGud99} under the name 'strength along a ray'.

Let $A\in \bhp$, consider a unit vector $\varphi$ in $H$ and denote by $P_\varphi$ the rank-one projection onto the subspace generated by $\varphi$ (recall that a rank-one operator is a bounded linear operator whose range is one-dimensional). The quantity
\begin{equation*}\label{E:strength}
\lambda(A, P_\varphi)= \sup \{ \lambda \geq 0 \, : \, \lambda
P_\varphi \leq A\}
\end{equation*}
is called the strength of $A$ along the ray represented by
$\varphi$. For curiosity, we mention that there is a nice and very useful formula for this quantity proved in \cite{BusGud99}:
\begin{equation*}\label{E:form2}
\lambda(A, P_\varphi)=
\left\{%
\begin{array}{ll}
    \|A^{-1/2}\varphi\|^{-2}, & \hbox{{\text{if }} $\varphi \in \rng (A^{1/2})$;} \\
    0, & \hbox{{\text{else}}.} \\
\end{array}%
\right.
\end{equation*}
(The symbol $\rng$ denotes the range of operators, and $A^{-1/2}$
stands for the inverse of $A^{1/2}$ on its range.)

After this, our next lemma reads as follows.

\begin{lemma}\label{L:2}
Assume $-1\leq p<0$. 
\begin{itemize}
\item[(i)]
For $A\in \bhp$, we have $A=0$ if and only if $A\kamp X=A$ holds for all $X\in \bhp$.
\item[(ii)] 
If $A\in \bhp$, we have that $A$ is invertible if and only if $\mathfrak I(A)=\bhp$. 
\item[(iii)]
The operator $A\in \bhp$ is a projection if and only if $I\kamp A=A$.
\item[(iv)] For any $A,B\in \bhp$, we have $A\kamp B= 0$ if and only if $\rng A^\fel \cap \rng B^\fel= \{0\}$.
\item[(v)] If $A\in \bhp$ is arbitrary and $P\in \bhp$ is a rank-one projection, then 
\[
A\kamp P=\ler{\frac{2\l(A,P)^q}{1+\l(A,P)^q}}^{\frac{1}{q}} P.
\]
\end{itemize}
\end{lemma}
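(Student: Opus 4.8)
The plan is to read off all five parts from a few elementary properties of the representing function together with one range‑theoretic input. Writing $q=-p$, so $0<q\le 1$, the mean $\kamp$ is the symmetric Kubo--Ando mean with representing function $f(t)=\ler{\frac{2t^q}{1+t^q}}^{\frac{1}{q}}$, $t>0$. First I would record that $f$ extends continuously to $[0,\infty)$ with $f(0)=0$, is strictly increasing, bounded, and maps $[0,\infty)$ bijectively onto $[0,2^{\frac{1}{q}})$ (the value $2^{\frac{1}{q}}$ being the unattained limit at $\infty$); moreover $f(1)=1$ and the only solutions of $f(t)=t$ in $[0,\infty)$ are $t=0$ and $t=1$, since $f(t)=t$ is equivalent to $1+t^q=2$. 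I would also note the formula $I\kamp A=f(A)$, valid first for invertible arguments through \eqref{E:funct} and then for every $A\in\bhp$ by property (d).

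With this, (iii) and (i) are immediate. For (iii): $I\kamp A=f(A)$, and by the spectral theorem $f(A)=A$ holds exactly when $\operatorname{spec}(A)$ lies in the fixed‑point set $\{0,1\}$ of $f$, i.e.\ exactly when $A$ is a projection. For (i): the key observation is that $A\kamp 0=0$ for every $A\in\bhp$ (equivalently $0\kamp X=0$ for every $X$); this is the vanishing case of the range formula (iv) below, since $\rng A^{\fel}\cap\rng 0^{\fel}=\{0\}$, and it also follows directly from $f(0)=0$ together with the boundedness of $f$. Hence if $A=0$ then $A\kamp X=0\kamp X=0=A$ for all $X$, while conversely specializing the hypothesis $A\kamp X=A$ to $X=0$ gives $A=A\kamp 0=0$.

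The heart of the matter is (iv), which I would import from \cite{ML11g}: because $f(0)=0$, the mean $\kamp$ is comparable from both sides with the harmonic mean (parallel sum), so $A\kamp B$ and $A\,\mathfrak{m}_{-1}\,B$ vanish simultaneously, and by the classical description of the parallel sum this happens precisely when $\rng A^{\fel}\cap\rng B^{\fel}=\{0\}$; the same comparison yields the containment $\overline{\rng(A\kamp B)^{\fel}}\subseteq\overline{\rng A^{\fel}}\cap\overline{\rng B^{\fel}}$. Granting (iv), part (v) follows: for a rank‑one projection $P=P_\varphi$ the square‑root range of $A\kamp P$ lies inside $\rng A^{\fel}\cap\mathbb{C}\varphi$, which is either $\{0\}$ or $\mathbb{C}\varphi$, so $A\kamp P=cP$ for some $c\ge 0$. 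To evaluate $c$ I would set $\lambda_0=\lambda(A,P)$ and use that strength along a ray is itself a Kubo--Ando mean of strengths (an observation of \cite{BusGud99}), giving $c=\lambda(A\kamp P,P)=\lambda(A,P)\kamp\lambda(P,P)=f(\lambda_0)$; the inequality $c\ge f(\lambda_0)$ can alternatively be checked by hand from $\lambda_0 P\le A$ and the monotonicity of $\kamp$, since $\lambda_0 P\kamp P=f(\lambda_0)P$ by commutativity.

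Finally, for (ii) I would argue in two directions. If $A$ is invertible, surjectivity of $f$ onto $[0,2^{\frac{1}{q}})$ lets me solve $A\kamp X=B$ for some $X\in\bhp$ whenever $\operatorname{spec}(A^{\mfel}BA^{\mfel})\subseteq[0,2^{\frac{1}{q}})$; in particular every $cA$ with $c<2^{\frac{1}{q}}$ is reachable in one step, hence every positive multiple of $A$ is reachable by iterating, and then an arbitrary $B\in\bhp$ is reached in one further step from a sufficiently large multiple $cA$ (chosen so that $2^{\frac{1}{q}}cA$ dominates $B$), showing $\mathfrak I(A)=\bhp$. If $A$ is not invertible then $\overline{\rng A^{\fel}}\neq H$, and by the range containment from (iv) every element of $\mathfrak I(A)$ has square‑root range inside $\overline{\rng A^{\fel}}$; since an invertible operator such as $I$ has full range, no invertible element lies in $\mathfrak I(A)$, so $\mathfrak I(A)\neq\bhp$. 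I expect the main obstacle to be (iv) and the exact strength value in (v): these are the genuinely nontrivial inputs, drawn from \cite{ML11g} and \cite{BusGud99}, whereas the reachability bookkeeping in (ii)---ensuring that arbitrarily large targets are attained in finitely many steps---is the most delicate of the elementary arguments.
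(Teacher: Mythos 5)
Your parts (i) and (iii) are sound: (i) coincides with the paper's own argument (the paper proves only (i) directly, disposing of (ii)--(v) by citing Lemmas 2.5, 2.2, 2.7 and 2.6 of \cite{ML11g}), and your derivation of (iii) from $I\kamp A=f(A)$ together with the fixed-point set $\{0,1\}$ of $f$ is a correct self-contained substitute for that citation; importing (iv) from \cite{ML11g} is likewise legitimate. But two of your self-contained arguments contain genuine errors. In the necessity direction of (ii) you claim that a non-invertible $A\in\bhp$ has $\overline{\rng A^\fel}\neq H$. This is false in infinite dimensions: an injective non-invertible positive operator, e.g.\ $\mathrm{diag}(1,\tfrac12,\tfrac13,\dots)$ on $\ell^2$, has dense range, and so does its square root, so the closed-range containment you stated excludes nothing for such $A$. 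The step is repairable, but only by working with the ranges themselves rather than their closures: the bound $f\le 2^{\frac1q}$ gives $A\kamp X\le 2^{\frac1q}A$ (first for invertible $A$, then via property (d)), whence $\rng (A\kamp X)^\fel\subseteq\rng A^\fel$ by Douglas' range inclusion theorem; this non-closed inclusion iterates through $\mathfrak I(A)$, and since a positive operator whose square root has full (not merely dense) range is invertible, $I\notin\mathfrak I(A)$ whenever $A$ is not invertible. As written, however, your argument fails for every non-invertible $A$ with dense range.

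The second gap is the equality in (v). Your monotonicity argument gives only $c\ge f(\l(A,P))$; for the reverse inequality you invoke the principle that $\l(\cdot,P)$ is a homomorphism for Kubo--Ando means, attributed to \cite{BusGud99}. That principle is false in general, even for means whose representing function vanishes at $0$: for the geometric mean take $H=\mathbb C^2$, $A=I$, $B=\mathrm{diag}(1,4)$, $\varphi=\tfrac{1}{\sqrt2}(1,1)$; then $\l(A\# B,P_\varphi)=\tfrac43$ while $\l(A,P_\varphi)\#\l(B,P_\varphi)=\sqrt{\tfrac85}$. (Nor does the statement appear in \cite{BusGud99}, which does not treat Kubo--Ando means.) What you need is the special case in which the second argument is $P$ itself, and that is exactly the nontrivial content of Lemma 2.6 of \cite{ML11g}, which the paper cites; it can be proved directly: for invertible $A$, the operator $A^\mfel PA^\mfel$ is rank one with nonzero eigenvalue $\|A^\mfel\varphi\|^2=\l(A,P)^{-1}$, so $f(0)=0$ yields $A\kamp P=f\bigl(\l(A,P)^{-1}\bigr)\,A^\fel QA^\fel$, where $Q$ is the projection onto $\mathbb C A^\mfel\varphi$; since $A^\fel QA^\fel=\l(A,P)P$ and $tf(1/t)=f(t)$ by the symmetry of $\kamp$, this equals $f(\l(A,P))P$, and the general case follows by letting $A+\epsilon I\downarrow A$, using property (d) and $\l(A+\epsilon I,P)\downarrow\l(A,P)$. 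Your remaining steps --- the reachability bookkeeping in the sufficiency half of (ii), and the reduction in (v) of $A\kamp P$ to a scalar multiple of $P$ --- are correct.
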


\begin{proof}
To see (i), observe that $0\kamp X=X\kamp 0=0$ holds for all $X\in \bhp$ (this follows from the property $\lim_{t\to 0} f(t)=0$ mentioned three paragraphs before the formulation of the lemma). Conversely, if $A\kamp X=A$ for all $X\in \bhp$, then choosing $X=0$, we have $A=A\kamp 0=0$.
The statements in (ii)-(v) are particular cases of the statements in Lemmas 2.5, 2.2, 2.7, 2.6 in \cite{ML11g}, respectively. 
\end{proof}

We are now in a position to prove Theorem \ref{T:4}.

\begin{proof}[Proof of Theorem \ref{T:4}]
To begin with, first observe that the converse statement in the theorem, i.e., the fact that any map of the form $A\mapsto TAT^*$ on $\bhp$ (where $T$ is an invertible bounded either linear or conjugate linear operator on $H$) satisfies \eqref{E:7}, follows from the transfer property of Kubo-Ando means, see \eqref{E:transfer}.
 
Let now $\phi:\bhp \to \bhp$ be a bijective map such that
\begin{equation*}
\phi(A\kamp B)=\phi(A)\kamp \phi(B), \quad A,B\in \bhp.
\end{equation*}

First suppose that the number $p$ is positive. By (ii) in Lemma \ref{L:1}, we obtain that $\phi(0)=0$. It follows that
\[
\phi(A/2^\ep)=\phi(A\kamp 0)=\phi(A)\kamp 0=\phi(A)/2^\ep, \quad A\in \bhp.
\]
By (iii) in Lemma \ref{L:1}, $\phi$ maps $\bhpp$ onto itself. 
We assert that $\phi$ restricted to $\bhpp$ is an order automorphism of $\bhpp$. This will follow from the following observation: for any $A\in \bhp$ and $B\in \bhpp$ we have $A\leq B$ if and only if there is an $X\in \bhp$ such that $A\kamp X=B/2^\ep$. The sufficiency part of this characterization is apparent since
\[
A/2^\ep=A\kamp 0\leq A\kamp X=B/2^\ep.
\]
The converse implication, i.e., the necessity part can be proved as in the first part of the proof of (iii) in Lemma \ref{L:1}. Therefore, we obtain that for any $A\in \bhp$ and $B\in \bhpp$, the inequality $A\leq B$ holds if and only if $\phi(A)\leq \phi(B)$.

The structure of order automorphisms of $\bhpp$ (without assuming any sort of homogeneity) is known. By Theorem 1 in \cite{ML11a} we have that there is an invertible bounded either linear or conjugate linear operator $T:H\to H$ such that
$
\phi(B)=TBT^*
$
holds for all $B\in \bhpp$. Since we have already proved that for any $A\in \bhp$ and $B\in \bhpp$, the inequality $A\leq B$ is equivalent to $\phi(A)\leq \phi(B)$, it is just routine to verify that 
\[
\phi(A)=TAT^*
\]
holds for all $A\in \bhp$. Indeed, for any $A\in \bhp$ and $B\in \bhpp$ we have 
$A\leq B$ if and only if $T^{-1}\phi(A){T^*}^{-1}\leq B$ which easily implies that $T^{-1}\phi(A){T^*}^{-1}=A$.

Assume now that $p<0$. By (i) in Lemma \ref{L:2}, it follows that $\phi(0)=0$. Applying (ii) of the same lemma, we obtain that $\phi$ maps $\bhpp$ onto itself. Therefore, $\phi(I)$ is invertible.
Considering the map $\phi(I)^\mfel \phi(.)\phi(I)^\mfel$, we can and do assume that $\phi$ sends $I$ to $I$. Then, by (iii) in Lemma \ref{L:2}, $\phi$ preserves the projections in both directions: $P\in \bhp$ is a projection if and only if $\phi(P)$ is a projection.

We next see that for any $A,B\in \bhp$ we have $\rng A^\fel \leq \rng B^\fel$ if and only if  $\rng \phi(A)^\fel \leq \rng \phi(B)^\fel$. This can easily be deduced from the following observation: $\rng A^\fel \leq \rng B^\fel$ if and only if $B\kamp X=0$ implies $A\kamp X=0$ for any $X\in \bhp$, see (iv) in Lemma \ref{L:2}.

It follows that on the set of all projections on $H$, the map $\phi$ preserves the range inclusion in both directions implying that it is an order automorphism. Consequently, $\phi$ (and also $\phi^{-1}$) sends rank-one projections to rank-one projections and then we can infer that it sends rank-one elements in $\bhp$ to rank-one elements (again by the range inclusion preserving property). Referring to the fact that the positive semidefinite rank-one operators are exactly the nonnegative scalar multiples of rank-one projections,  we also easily obtain that for a given rank-one projection $P$, there is a bijective function $g$ of the nonnegative reals such that 
\[
\phi(tP)=g(t)\phi(P), \quad t\geq 0.
\]
In what follows we prove that $g$ is necessarily the identity.
For any positive real numbers $t,s$, we use the transfer property and (v) in Lemma \ref{L:2} to compute
\begin{equation}\label{E:11}
(tP)\kamp (sP)=s\ler{\ler{\frac{t}{s}}P\kamp P}=s\ler{\frac{2\ler{\frac{t}{s}}^q}{1+\ler{\frac{t}{s}}^q}}^\frac{1}{q}P,
\end{equation}
where $q=-p$.
We infer
\begin{equation*}
\begin{gathered}
g\ler{s\ler{\frac{2\ler{\frac{t}{s}}^q}{1+\ler{\frac{t}{s}}^q}}^\frac{1}{q}}\phi(P)=
\phi((tP)\kamp (sP))=\phi(tP)\kamp\phi(sP)\\=(g(t)\phi(P))\kamp (g(s)\phi(P))=
g(s)\ler{\frac{2\ler{\frac{g(t)}{g(s)}}^q}{1+\ler{\frac{g(t)}{g(s)}}^q}}^\frac{1}{q}\phi(P).
\end{gathered}
\end{equation*}
We therefore obtain the equality 
\begin{equation*}
\begin{gathered}
g\ler{s\ler{\frac{2\ler{\frac{t}{s}}^q}{1+\ler{\frac{t}{s}}^q}}^\frac{1}{q}}=
g(s)\ler{\frac{2\ler{\frac{g(t)}{g(s)}}^q}{1+\ler{\frac{g(t)}{g(s)}}^q}}^\frac{1}{q}.
\end{gathered}
\end{equation*}
Denoting $h(t)=g(t^{\frac{1}{q}})^q$, $t\geq 0$, this apparently implies
\begin{equation*}
h\ler{\frac{2t^qs^q}{t^q+s^q}}=\frac{2h(t^q)h(s^q)}{h(t^q)+h(s^q)}
\end{equation*}
and then that
\begin{equation*}
h\ler{\frac{2ts}{t+s}}=\frac{2h(t)h(s)}{h(t)+h(s)}.
\end{equation*}
Denoting $k(t)=1/h(1/t)$, $t>0$, we have 
\begin{equation}\label{E:12}
k\ler{\frac{t+s}{2}}=\frac{k(t)+k(s)}{2}
\end{equation}
for any positive real numbers $t,s$.
Since $k$ is a bijection of the positive half-line (which follows from the same property of $g$), we deduce that $k$ is a constant multiple of the identity (cf. Theorem \ref{T:2}). By $g(1)=1$ we thus obtain that $k$ is the identity and this implies that $g$ is also the identity, which was our claim.

We can now verify that $\phi$ is an order automorphism of $\bhp$.
Indeed, from \cite{BusGud99} we know that for any $A,B\in \bhp$, the inequality $A\leq B$ holds if and only if $\l(A,P)\leq \l(B,P)$ holds for all rank-one projections $P\in  \bhp$.
The equation $\phi(A\kamp P)=\phi(A)\kamp \phi(P)$ implies the identity
\begin{equation*}
\ler{\frac{2\l(A,P)^q}{1+\l(A,P)^q}}^{\frac{1}{q}}=\ler{\frac{2\l(\phi(A),\phi(P))^q}{1+\l(\phi(A),\phi(P))^q}}^{\frac{1}{q}}
\end{equation*}
from which we easily obtain that $\l(A,P)\leq \l(B,P)$ if and only if $\l(\phi(A),\phi(P))\leq \l(\phi(B),\phi(P))$. We thus infer that $\phi$ is an order isomorphism of $\bhp$. The structure of those maps was determined in \cite{ML11a}.  It follows from the results there, that we have an invertible bounded either linear or conjugate linear operator $T$ on $H$ such that $\phi(A)=TAT^*$, $A\in \bhp$. This completes the proof of our theorem.
\end{proof}

Finally, we present the proof of the last result of the paper.

\begin{proof}[Proof of Theorem \ref{T:6}]
Contrary to the assertion, assume that $\phi:\bhp \to \bhp$ is a bijective map such that
\begin{equation}\label{E:10}
\phi(A\cmp B)=\phi(A)\kamp \phi(B), \quad A,B\in \bhp.
\end{equation}
Assume first that $p$ is positive. 
It is easy to see that the characterization of 0 given in (ii) in Lemma \ref{L:1} as well as the characterization of invertibility given in (iii) are valid also for the conventional power mean $\cmp$.
It follows that $\phi(0)=0$ implying that $\phi$ satisfies $\phi(A/2^\ep)=\phi(A)/2^\ep$ for all $A\in \bhp$, and also that $\phi$ maps $\bhpp$ onto itself. 

As we have seen in the proof of Theorem \ref{T:4},
for any $A,B\in \bhpp$, the solvability of the equation $A\kamp X=B/2^\ep$ for $X\in \bhp$ is equivalent to the inequality $A\leq B$ while the solvability of the equation $A\cmp X=B/2^\ep$ is clearly equivalent to $A^p\leq B^p$. It follows that the map $A\mapsto \phi(A^\ep)$ is an order automorphism of $\bhpp$. As above, we conclude that there is an invertible bounded linear or conjugate linear operator $T$ on $H$ such that $\phi(A)=TA^pT^*$, $A\in \bhpp$. Using \eqref{E:10}, we easily obtain that
\[
T\frac{A^p+B^p}{2}T^*=(TA^p T^*)\kamp (TB^pT^*) =T(A^p\kamp B^p)T^*, \quad A,B\in \bhpp,
\]
and then that
\[
\frac{A+B}{2}=A\kamp B, \quad A,B\in \bhpp.
\]
This immediately implies that $p=1$, a contradiction.

Assume now that $p$ is negative. Recall the definition of the conventional power mean $\cmp$ given in \eqref{E:8}. 
It is very easy to see that the characterizations of 0, invertible elements and projections given in (i), (ii) and (iii) in Lemma \ref{L:2} are valid also for $\cmp$, but (iv) and (v) change as follows.
For any $A,B\in \bhp$, we have $A\cmp B= 0$ if and only if $\rng A^{\frac{q}{2}} \cap \rng B^{\frac{q}{2}}= \{0\}$, and for an arbitrary $A\in \bhp$ and rank-one projection $P\in \bhp$, we have
\[
A\cmp P=\ler{\frac{2\l(A^q,P)}{1+\l(A^q,P)}}^{\frac{1}{q}} P.
\]
Here, $q=-p$.
After this, following the second part of the proof of Theorem \ref{T:4}, we infer that $\phi(I)$ is invertible and hence the map $\phi(I)^\mfel \phi(.)\phi(I)^\mfel$ also satisfies \eqref{E:10}, moreover it sends $I$ to $I$. Therefore, we can clearly assume that the original transformation $\phi$ already has this additional property. It then follows that $\phi$ sends projections to projections and we can continue following the argument in the proof of Theorem \ref{T:4} till the point that for any rank-one projection $P$ on $H$, there is a bijective function $g$ of the nonnegative reals such that
\[
\phi(tP)=g(t)\phi(P), \quad t\geq 0
\]
holds.
Observe that for commuting $A,B\in \bhp$, we have $A\kamp B=A\cmp B$. In fact, for invertible $A,B$ this is easy to check and then we can apply the continuity property of Kubo-Ando means (see the explanation in the introduction of $\cmp$ for negative $p$ presented before the formulation of Theorem \ref{T:5}). Therefore, the same computation as from \eqref{E:11} to \eqref{E:12} applies and we obtain that 
$g$ is the identity.

If $A\in \bhp$ is arbitrary and $P\in \bhp$ is any rank-one projection, then
from $\phi(A\cmp P)=\phi(A)\kamp \phi(P)$ we deduce that
\[
\ler{\frac{2\l(A^q,P)}{1+\l(A^q,P)}}^{\frac{1}{q}} \phi(P)=
\ler{\frac{2\l(\phi(A),\phi(P))^q}{1+\l(\phi(A),\phi(P))^q}}^{\frac{1}{q}} \phi(P).
\]
This implies that we have $\l(A^q,P)\leq \l(B^q,P)$ if and only if $\l(\phi(A),\phi(P))\leq \l(\phi(B),\phi(P))$. For any $A,B\in \bhp$, we infer that $A^q\leq B^q$ if and only if $\phi(A)\leq \phi(B)$. This means that the map $A\mapsto \phi(A^{\frac{1}{q}})$ is an order isomorphism of $\bhp$. As before, this implies that there is an invertible bounded either linear or conjugate linear operator $T$ on $H$ such that
\[
\phi(A)=TA^qT^*, \quad A\in \bhp.
\]
From \eqref{E:10} we obtain that 
\[
T(A\cmp B)^q T^*=(TA^qT^*)\kamp (TB^q T^*)
\]
or equivalently that
\[
(A\cmp B)^q=A^q\kamp B^q
\]
holds for all $A,B\in \bhp$.
It implies that
\[
A^q\mathfrak{m}_{-1} B^q=A^q\kamp B^q, \quad A,B\in \bhp
\]
and hence we can conclude that $p=-1$, a contradiction again.
This finishes the proof of the theorem.
\end{proof}

We close the paper with the following open problems.
It would be interesting to clarify if the continuity assumptions in Theorems \ref{T:1} and \ref{T:3} can be dropped. Furthermore,
it seems a really nontrivial problem to investigate how the statements in Theorems \ref{T:5}-\ref{T:6} survive in general $C^*$-algebras.
Finally, we find it also interesting for $p\neq \pm 1$ to consider the problem if there is any nontrivial homomorphism from $\bhp$ equipped with the operation $\kamp$ into $\bhp$ equipped with $\cmp$  (Theorem \ref{T:6} asserts that there is no such isomorphism).

\bibliographystyle{amsplain}

\end{document}